\documentclass[12pt]{amsart}

\usepackage{epsfig}
\usepackage{amsmath}
\usepackage{amssymb,amsthm}
\usepackage{graphicx}
\usepackage{booktabs}

\usepackage{pst-node}
\usepackage{tikz}
\usepackage{enumerate}
\usepackage[backref]{hyperref}
\usepackage{a4wide}
\usepackage[normalem]{ulem}

\usepackage[margin=2.9cm]{geometry}

\newtheorem{propo}{Proposition}[section]

\newtheorem{lemma}[propo]{Lemma}

\newtheorem{theo}[propo]{Theorem}
\newtheorem{examp}[propo]{Example}

\newcommand{\bl}{\begin{lemma}}
\newcommand{\el}{\end{lemma}}

 \def\C{{\bf C}}

\usepackage{indentfirst,latexsym,bm}

\def\C{{\rm C}}

\def\K{{\rm K}}

\begin{document}
\title{A classification of locally-quasiprimitive circulant digraphs}

\thanks{Supported by NSFC (12271524, 12331013), NSF of Hunan (2026JJ50358) and the Scientific Research Project of the Education Department of Hunan Province (24A0142).}

\author[W. Jin]{Wei Jin}
\address{Wei Jin\\
School of Mathematics and Computational Science\\
Hunan Research Center of the Basic Discipline Fundamental Algorithmic Theory and Novel Computational Methods\\
Xiangtan University\\
Xiangtan, Hunan, 411105, P.R. China}
\address{School of Statistics and Data Science\\
Jiangxi University of Finance and Economics\\
Nanchang, Jiangxi, 330013, P.R. China}
\email{jinweipei82@163.com}

\author[Y.X. Jin]{Yu Xiang Jin}
\address{Yu Xiang Jin}
\address{School of Mathematics and Computer Sciences\\
Nanchang University\\
Nanchang, Jiangxi, 330031, P.R.China}
\email{18779893910@163.com}

\author[C. X. Li]{Cai Xia Li}
\address{Cai Xia Li (Corresponding author)\\
School of Mathematics and Computational Science,\\
Key Laboratory of Intelligent Computing and Information Processing of Ministry of Education\\
Xiangtan University\\
Xiangtan, Hunan, 411105, P.R. China}
\email{caixial@yeah.net}

\author[P. S. Li]{Ping Shan Li}
\address{Ping Shan Li\\
School of Mathematics and Computational Science\\
Key Laboratory of Intelligent Computing and Information Processing of Ministry of Education\\
Xiangtan University\\
Xiangtan, Hunan, 411105, P.R. China}
\email{lips@xtu.edu.cn}

\date\today

\maketitle

\begin{abstract}

Circulant digraphs are Cayley digraphs over finite cyclic groups and constitute a fundamental class of objects in algebraic graph theory. Extending the  classification of locally-primitive circulant graphs \cite{JZ-2026}, we completely determine all locally-quasiprimitive circulant digraphs.
Our main theorem shows that  a connected locally-quasiprimitive circulant digraph  is isomorphic to one of the following:
the complete graph \(\K_n\),
the complete bipartite graph \(\K_{n/2,n/2}\),
the graph   \(\K_{n/2,n/2}-\frac{n}{2}\K_2\) (with \(n/2\) odd),
the cycle \(\C_n\),
the directed cycle \(\vec \C_n\),
a normal circulant digraph of prime valency,
the lexicographic product \(\vec \C_m[\overline{\K_b}]\),  or
the tensor product \(\vec \C_m\times \K_b\) with \(\gcd(m,b)=1\).

\medskip
\noindent{\bf Keywords:} circulant digraph, quasiprimitive permutation group, locally-quasiprimitive graph.

\noindent{\bf 2020 Mathematics Subject Classification:} 05C25, 20B25.
\end{abstract}

\vspace{2mm}

\section{Introduction}

In this paper, we establish a complete classification of locally-quasiprimitive circulant digraphs, a prominent class of symmetric digraphs in algebraic graph theory. We begin by introducing the fundamental definitions, standard notation, and necessary research background to frame our investigation.

A finite \emph{digraph} (short for \emph{directed graph}) $\Gamma$ is a pair $(V(\Gamma),\operatorname{Arc}(\Gamma))$ consisting of a finite vertex set $V(\Gamma)$ and an arc set $\operatorname{Arc}(\Gamma)\subseteq V(\Gamma)\times V(\Gamma)$. An element $(u,v)\in \operatorname{Arc}(\Gamma)$ is a \emph{directed edge} from $u$ to $v$, denoted $u\rightarrow v$; the ordered pair $(u,v)$ is then called an \emph{arc} of $\Gamma$. For a vertex $v\in V(\Gamma)$, the set of \emph{in-neighbours} of $v$ is $\Gamma^-(v)=\{u\in V\mid u\rightarrow v\}$, and the set of \emph{out-neighbours} of $v$ is $\Gamma^+(v)=\{u\in V\mid v\rightarrow u\}$. A digraph $\Gamma$ is called \emph{$k$-regular} if $|\Gamma^-(v)|=|\Gamma^+(v)|=k$ for every $v\in V(\Gamma)$, and $\Gamma$ is \emph{regular} if it is $k$-regular for some positive integer $k$, this integer $k$ is then the \emph{valency} of $\Gamma$. A digraph $\Gamma$ is said to be \emph{locally-quasiprimitive} if, for each vertex $v$, the stabilizer of $v$ in the full automorphism group acts quasiprimitively on the set of out-neighbours of $v$. A \emph{graph} is a digraph in which every arc is bidirectional: for all vertices $u,v$, $(u,v)$ is an arc exactly when $(v,u)$ is an arc. If $\Gamma$ is a graph and $v\in V(\Gamma)$, then $\Gamma^+(v)=\Gamma^-(v)$, and we denote this common set by $\Gamma(v)$.

A digraph (graph) $\Gamma$ is called a \emph{circulant digraph} (\emph{circulant graph}) if its full automorphism group contains a cyclic subgroup that acts semiregularly and transitively on the vertex set. As one of the most fundamental and extensively studied families of algebraic graphs, circulant digraphs (graphs) possess inherent cyclic symmetry and highly regular structural properties. Owing to their elegant combinatorial features and wide-ranging applications in network design, coding theory and parallel computing, they have long been a central object in the study of vertex-transitive digraphs (graphs).

The investigation of  circulant digraphs naturally began with primitive group actions. By a classical theorem of Schur (see \cite[Theorem~25.2]{Wielandt-book}), any circulant graph whose automorphism group acts primitively on the vertex set is either a complete graph or a circulant graph of prime order. This milestone result laid a solid foundation for subsequent work and motivated the study of circulant graphs under refined group-theoretic constraints.

While primitivity is a strong global symmetry condition, later research shifted to the more flexible property of arc-transitivity. The systematic study of arc-transitive circulant digraphs was initiated in the 1970s by Chao and Wells \cite{Chao1971,Chao1973}, who pioneered the exploration of symmetric circulant structures. Subsequent advances greatly enriched this field: Alspach et al.\ \cite[Theorem~1.1]{ACMX-1996} completed the classification of $2$-arc-transitive circulant graphs, while Kov\'acs \cite{Kovacs-2004} and Li \cite{LCH-circulant-2005} independently established reductive characterisations of connected arc-transitive circulant digraphs. Recently, Li, Xia and Zhou \cite{LXZ-at-circ-2021} refined these results into an explicit structural characterization of arc-transitive circulant digraphs.
The classification of finite locally-primitive circulant graphs was   completed in \cite{JZ-2026}.

Despite the extensive progress on primitive, 2-arc-transitive and locally-primitive circulant digraphs, the classification under the weaker yet critical condition of local quasiprimitivity remains unresolved. Local quasiprimitivity is a vital group-theoretic symmetry property that generalizes primitivity and captures finer local structural features. Li, Praeger, Venkatesh and Zhou \cite{LPVZ-2002} studied normal quotients of locally quasiprimitive graphs and demonstrated the relevance of this class. As a natural and meaningful extension of prior classification work, characterizing locally-quasiprimitive circulant digraphs not only enriches the classification theory of symmetric circulant digraphs but also yields new insights into the interplay between local group actions and global graph symmetry. In this paper, we  establish a complete classification of locally-quasiprimitive circulant digraphs.

Our main theorem is the following:

\begin{theo}\label{localquasiprim-circ-th}
Let $T$ be a cyclic group of order $n\geq 3$ and let $S\subseteq T$ be such that $T=\langle S\rangle$. Let $\Gamma=\operatorname{Cay}(T,S)$ be a locally-quasiprimitive circulant digraph. Then  one of the following holds.

\begin{enumerate}[(I)]
\item $S=S^{-1}$, and $\Gamma$ is isomorphic to one of the following graphs:
\begin{itemize}
\item[(1)] $\K_n$;
\item[(2)] $\K_{\frac{n}{2},\frac{n}{2}}$;
\item[(3)] $\K_{\frac{n}{2},\frac{n}{2}}-\frac{n}{2}\K_2$, where $\frac{n}{2}$ is odd;
\item[(4)] the cycle  $C_n$.
\end{itemize}
In particular, $\Gamma$ is $2$-arc-transitive.

\item $S^{-1}\cap S=\varnothing$, and $\Gamma$ is isomorphic to one of the following digraphs:
\begin{itemize}
\item[(1)] the directed cycle $\vec{C}_n$;
\item[(2)] $|S|=p\ge 2$ is a prime,  and there exist an element $b\in T\setminus\{1\}$ and an automorphism $\mu\in\operatorname{Aut}(T)$ of order $p$ such that
\[
S=b^{\langle\mu\rangle}=\{\,b,b^\mu,b^{\mu^2},\dots,b^{\mu^{p-1}}\,\},
\]
$\langle S\rangle=T$, and $S\cap S^{-1}=\varnothing$. Furthermore,
$\operatorname{Aut}(\Gamma)=T\rtimes\langle\mu\rangle\le \operatorname{Hol}(T)$;
\item[(3)] $\Gamma\cong \vec{C}_m[\overline{\K_b}]$, where  $m\ge 3, b\ge 2$ are integers  with $mb=n$;
\item[(4)] $\Gamma\cong \vec{C}_m\times \K_b$, where $m\ge 3$, $b\ge 2$ are integers with $mb=n$ and $\gcd(m,b)=1$.
\end{itemize}
\end{enumerate}
\end{theo}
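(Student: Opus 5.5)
Set $A=\operatorname{Aut}(\Gamma)$ and $v=1$, so $T\le A$ acts regularly and $A_1$ is quasiprimitive on $\Gamma^+(1)=S$. The plan is to show first that $S$ is either symmetric or antisymmetric. Since $A_1$ fixes $1$, it preserves both $\Gamma^+(1)$ and $\Gamma^-(1)=S^{-1}$, so it preserves $S\cap S^{-1}$. Local quasiprimitivity forces transitivity on $S$ (apart from the trivial case $|S|=2$, where a Klein-type check is needed). Hence $S\cap S^{-1}$ is $\varnothing$ or $S$, and the proof splits into cases (I) and (II).

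For case (I), $\Gamma$ is an undirected connected locally-quasiprimitive circulant graph. I would invoke the reduction theory for arc-transitive circulants of Kov\'acs and Li \cite{Kovacs-2004,LCH-circulant-2005}. These say that either $\Gamma$ is a normal circulant, or $\Gamma$ is a (deleted) lexicographic product $\Sigma[\overline{\K_b}]$ or $\Sigma[\overline{\K_b}]-b\Sigma$ of a smaller arc-transitive circulant $\Sigma$.

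1. In the normal case, $A_1\le\operatorname{Aut}(T)$ is abelian, and an abelian quasiprimitive group is regular of prime degree. So $|S|=p$, and $S=S^{-1}$ with $S=b^{\langle\mu\rangle}$ forces $-1\in\langle\mu\rangle$. Then $p=2$ and $\Gamma=C_n$.
2. In the lexicographic case, $A_1$ preserves the block system of $S$ formed by its intersections with the cosets of the $\overline{\K_b}$-subgroup. Quasiprimitivity forces the kernel on blocks to act transitively on $S$ (possible only if $\Sigma$ has valency $1$ or is complete), or forces trivial blocks. Repeating with $\Sigma$ yields $\K_n$, $\K_{n/2,n/2}$, or $\K_{n/2,n/2}-\frac n2\K_2$; in the last case $n/2$ must be odd for the graph to be a circulant.

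Alternatively, I could cite \cite{JZ-2026} and show that quasiprimitivity adds nothing beyond primitivity here. Finally, one checks directly that each of these graphs is $2$-arc-transitive.

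For case (II), I would use the characterization of arc-transitive circulant digraphs by Li, Xia and Zhou \cite{LXZ-at-circ-2021}. By it, $\Gamma$ is a normal circulant, a lexicographic product $\Sigma[\overline{\K_b}]$, or a tensor-type product $\Sigma\times\K_b$ with coprime orders, each built from smaller arc-transitive circulants.

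1. In the normal case, the abelian-quasiprimitive argument gives $|S|=p$ and $A_1=\langle\mu\rangle$ regular on $S$. This yields (II)(2), with $|S|=1$ giving $\vec C_n$.
2. In the lexicographic case, $A_1$ contains $\prod\operatorname{Sym}(b)$ acting on $S$, which is a union of blocks of size $b$. Quasiprimitivity forces $S$ to be a single block, so $\Sigma$ has out-valency $1$, i.e. $\Sigma=\vec C_m$; this is (II)(3).
3. In the tensor case, $A_1$ contains a normal subgroup isomorphic to $\operatorname{Sym}(b-1)$ or similar, coming from the $\K_b$-factor, whose orbits on $S$ must be all of $S$. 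This forces $\Sigma=\vec C_m$ and $\gcd(m,b)=1$, giving (II)(4).

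The main obstacle is this case analysis of the nonnormal decompositions. The danger is iterated decompositions where $\Sigma$ itself is nonnormal. I would argue by induction on $n$, showing that the normal subgroup of $A_1$ induced by the kernel on the block system is nontrivial and intransitive on $S$ unless the quotient has out-valency $1$. The small degenerate cases ($|S|=2$, $n$ small) would be checked by hand.
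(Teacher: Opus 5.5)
Your overall strategy matches the paper's. You split into $S=S^{-1}$ versus $S\cap S^{-1}=\varnothing$; quasiprimitive groups are transitive by definition, so no separate check for $|S|=2$ is needed. You settle the normal case by ``an abelian quasiprimitive group is regular of prime degree'', as in Lemmas~\ref{not normal} and~\ref{dicic-normal-1}. You kill the non-normal cases of the Kov\'acs--Li/Li--Xia--Zhou reduction by exhibiting a nontrivial intransitive normal subgroup of the vertex stabiliser, and your case (II) essentially reproduces Lemmas~\ref{dicic-normal-1}, \ref{dig-not-normal-1} and~\ref{dig-not-normal-2}.

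The gap is in case (I): the deleted lexicographic products $\Sigma[\overline{\K_b}]-b\Sigma$ are never analysed. Your item 2 is the lexicographic case, yet it outputs $\K_{n/2,n/2}-\frac n2\K_2$. For $n/2\ge 3$ that graph is twin-free, so it is not of the form $\Sigma[\overline{\K_b}]$ with $b\ge2$. Nothing in (I) excludes, e.g., $C_5\times\K_4$ on $\mathbb Z_{20}$ ($=C_5[\overline{\K_4}]-4C_5$), or any $\Gamma_0\times\K_{n_1}\times\cdots$ with $\Gamma_0$ of valency at least $2$.

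The lexicographic argument itself is also wrong as stated. The kernel is transitive on $S$ only when $\Sigma$ has valency $1$, not ``or is complete''. For $\K_{m[b]}$ with $m\ge3$, $b\ge2$, the base group of $S_b\wr S_m$ meets $\operatorname{Aut}(\Gamma)_v$ in a normal subgroup inducing $S_b^{m-1}$, which is intransitive on $\Gamma(v)$. Also, $\K_n$ does not arise from this case at all; it must be split off first, since Theorem~\ref{arccirculant-1} assumes $\Gamma$ is not complete.

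To close the gap, first make the fibre partition canonical. The cosets of the order-$b$ subgroup from the decomposition need not be blocks of $\operatorname{Aut}(\Gamma)$. For example, $\K_{4,4}=C_4[\overline{\K_2}]$ on $\mathbb Z_8$, and $\{0,4\}$ is not a block of $S_4\wr S_2$. So ``$A_1$ contains $\prod\operatorname{Sym}(b)$'' gives you no normal subgroup; the same remark applies to your (II)(2). Passing to maximal twin classes, as in Lemma~\ref{circulant-lex-1}, gives $\Gamma\cong\Sigma'[\overline{\K_{b}}]$ with an invariant fibre system. Its kernel meets $\operatorname{Aut}(\Gamma)_v$ in a nontrivial normal subgroup fixing every fibre inside $\Gamma^+(v)$. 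Hence $\Sigma'$ has out-valency $1$: $\Sigma'=\K_2$ in (I) and $\Sigma'=\vec C_m$ in (II).

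For the deleted case in (I), run your (II)(3) argument, using the explicit formula $\operatorname{Aut}(\Gamma)\cong\operatorname{Aut}(\Gamma_0)\times S_{n_1}\times\cdots\times S_{n_r}$ from \cite{LXZ-at-circ-2021} when $b'=1$. It is this formula, not the bare decomposition $\Sigma\times\K_b$, that makes the factor $S_{n_1-1}$ normal in the stabiliser. That factor must be transitive on $\Gamma(v)=\Gamma_0(v_0)\times\prod_i(V(\K_{n_i})\setminus\{v_i\})$. This forces $r=1$ and $\Gamma_0$ of valency $1$, so $\Gamma_0=\K_2$ and $\Gamma\cong\K_2\times\K_{n_1}=\K_{n_1,n_1}-n_1\K_2$, with $n_1$ odd by coprimality. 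If $r=0$, then $\Gamma$ is normal and your item 1 gives $C_n$. This is exactly the content of Lemma~\ref{circulant-notminus-1}, which your proposal is missing.
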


We remark that all graphs listed in Theorem~\ref{localquasiprim-circ-th} (I) are locally-quasiprimitive. However, there exist circulant graphs that are $G$-locally-quasiprimitive for some proper subgroup $G<\operatorname{Aut}(\Gamma)$, yet fail to be locally-quasiprimitive (under the full automorphism group), as illustrated by the following example.

\begin{examp}



Let $q$ be a prime power with $q\ge 4$ and $q\equiv 1\pmod 3$, and set $n=q+2\ (\ge 6)$. Let $X=\mathrm{K}_{n[3]}$ be the complete $n$-partite graph with blocks of size $3$. Choose a vertex $u$ and let $B_0$ be the block containing $u$. The neighbourhood $X(u)$ consists of the remaining $n-1=q+1$ blocks, so $|X(u)|=3(q+1)$. Let $G\le\operatorname{Aut}(X)$ be such that $G_u\cong\mathrm{PSL}(2,q)$ acts as follows:
\begin{itemize}
\item $G_u$ fixes $u$ and acts trivially on $B_0\setminus\{u\}$;
\item on the $q+1$ blocks of $X(u)$, $G_u$ acts with the natural primitive action of $\mathrm{PSL}(2,q)$ on the projective line.
\end{itemize}
The stabilizer of a block is a Borel subgroup $K$ of $\mathrm{PSL}(2,q)$. Since $q\equiv1\pmod3$, $K$ possesses a normal subgroup $N$ of index $3$. The quotient $K/N\cong\mathbb{Z}_3$ acts regularly on the three vertices of that block (equivalently, $K$ acts transitively with kernel $N$, and the induced faithful action is regular). The action on the remaining blocks is then given by conjugation. This yields a faithful, transitive and imprimitive action of $G_u$ on $X(u)$. Because $\mathrm{PSL}(2,q)$ is simple, its only non-trivial normal subgroup is $\mathrm{PSL}(2,q)$ itself, which is transitive. Hence $G_u$ is quasiprimitive on $X(u)$.

(For $n<6$, i.e.\ $n=3,4,5$, it can be shown that no such $G$ exists.)

\end{examp}

We also note that the circulant digraphs  listed in Theorem~\ref{localquasiprim-circ-th} (II) are normal.
By Lemmas~\ref{paley-normal} and \ref{paley-exam-1}, such a digraph  is isomorphic to a Paley tournament $P(q)$ if and only if the following hold:
\begin{enumerate}
    \item $|T|=q$ is a safe prime and $q\equiv 3\pmod 4$;
    \item the valency $p$ equals $(q-1)/2$;
    \item the connection set $S$ is exactly the set of non-zero squares in the finite field $\mathbb F_q$.
\end{enumerate}


\bigskip
\bigskip

\section{Preliminaries}

In this section we give some definitions concerning groups and digraphs and also provide several results that will be used in the subsequent discussion.

Throughout this paper, all digraphs are finite, simple and connected. For group-theoretic terminology not defined here we refer the reader to \cite{Cameron-1,DM-1,Wielandt-book}.

Let $G$ be a permutation group on a set $\Omega$. We say that $G$ is \emph{semiregular} on $\Omega$ if $G_\alpha=1$ for every $\alpha\in\Omega$, and \emph{regular} if it is transitive and semiregular. The action of $G$ on $\Omega$ is \emph{faithful} if the only element fixing every point of $\Omega$ is the identity.

Let $G$ act transitively on $\Omega$. A \emph{partition} of $\Omega$ is a set $\mathcal B=\{B_1,B_2,\dots,B_n\}$ of non-empty subsets such that $B_i\cap B_j=\varnothing$ whenever $i\ne j$ and $\Omega=\bigcup_i B_i$. A partition $\mathcal B$ is \emph{$G$-invariant} if for every $g\in G$ and every $B_i\in\mathcal B$, we have $B_i^g\in\mathcal B$. The partitions into singletons and into one part are the \emph{trivial} partitions; all other partitions are non-trivial. Each member of a $G$-invariant partition is called a \emph{block} of $G$. The whole set $\Omega$ and the singletons $\{\alpha\}$ ($\alpha\in\Omega$) are \emph{trivial blocks}; all others are \emph{non-trivial}. If $N$ is an intransitive normal subgroup of $G$, then each $N$-orbit is a block of $G$, and the set of $N$-orbits forms a $G$-invariant partition of $\Omega$. The action of $G$ on $\Omega$ is \emph{primitive} if it has no non-trivial $G$-invariant partitions, otherwise it is \emph{imprimitive}. There is a celebrated classification of finite primitive permutation groups into eight types, mainly due to O'Nan and Scott; see \cite{LPS-1}.

A transitive permutation group $G\le \operatorname{Sym}(\Omega)$ is \emph{quasiprimitive} if every non-trivial normal subgroup of $G$ is transitive on $\Omega$. Quasiprimitivity generalizes primitivity, since every normal subgroup of a primitive group is transitive, but there exist quasiprimitive groups that are not primitive. For more information about quasiprimitive permutation groups, refer to \cite{Praeger-1993-onanscott,Praeger-2,Praeger-2003-biq}.

Let $\Gamma$ be a digraph. Denote by $V(\Gamma)$, $\operatorname{Arc}(\Gamma)$ and $\operatorname{Aut}(\Gamma)$ its vertex set, arc set and automorphism group, respectively. The size of the vertex set is the \emph{order} of $\Gamma$. For a subgroup $G\le \operatorname{Aut}(\Gamma)$, let $G_v$ be the stabilizer of $v$ in $G$, and let $G_v^{\Gamma^+(v)}$ be the permutation group induced by $G_v$ on $\Gamma^+(v)$. For a positive integer $s$, an \emph{$s$-arc} of $\Gamma$ is a sequence $(v_0,v_1,\dots,v_s)$ such that $v_i$ and $v_{i+1}$ are adjacent and $v_{j-1}\ne v_{j+1}$ for $0\le i\le s-1$ and $1\le j\le s-1$. In particular, $1$-arcs are simply called \emph{arcs}. The digraph $\Gamma$ is said to be \emph{$G$-vertex-transitive} or \emph{$(G,s)$-arc-transitive} if $G$ is transitive on the vertex set or on the set of $s$-arcs, respectively. If $G=\operatorname{Aut}(\Gamma)$, we drop the prefix ``$G$-''.

We say that $\Gamma$ is \emph{$G$-locally-primitive} or \emph{$G$-locally-quasiprimitive} if, for each vertex $u$, the stabilizer $G_u$ is primitive or quasiprimitive, respectively, on the out-neighbourhood of $u$. The digraph $\Gamma$ is \emph{locally-quasiprimitive} if it is $G$-locally-quasiprimitive for $G=\operatorname{Aut}(\Gamma)$.

For a digraph $\Gamma$, its \emph{complement} $\overline{\Gamma}$ is the graph with vertex set $V(\Gamma)$ and two vertices adjacent if and only if they are not adjacent in $\Gamma$.

For a positive integer $n$, $\K_n$ denotes the complete graph on $n$ vertices, and for $n\ge 3$, $C_n$ denotes a cycle of length $n$, while $\vec{C}_n$ denotes a directed cycle of length $n$. For $m,n\ge 2$, we denote by $\K_{m[n]}$ the complete multipartite graph with $m$ parts of size $n$; that is, two vertices are adjacent if and only if they lie in distinct parts.

Let $\Gamma$ be a graph and let $G\le\operatorname{Aut}(\Gamma)$. Suppose $\mathcal B=\{B_1,\dots,B_n\}$ is a $G$-invariant partition of $V(\Gamma)$. The \emph{quotient graph} $\Gamma_{\mathcal B}$ of $\Gamma$ relative to $\mathcal B$ is the graph with vertex set $\mathcal B$ such that $\{B_i,B_j\}$ is an edge of $\Gamma_{\mathcal B}$ if and only if there exist $x\in B_i$, $y\in B_j$ with $\{x,y\}$ an edge of $\Gamma$. We say that $\Gamma_{\mathcal B}$ is \emph{nontrivial} if $1<|\mathcal B|<|V(\Gamma)|$. Since $\mathcal B$ is $G$-invariant, $G$ induces a subgroup of automorphisms of $\Gamma_{\mathcal B}$.

We denote by $\mathbb Z_n$ the cyclic group of order $n$.

Let \( T \) be a group. The \emph{holomorph} of \( T \), denoted by \( \operatorname{Hol}(T) \), is the semidirect product of \( T \) and its automorphism group \( \operatorname{Aut}(T) \):
\[
\operatorname{Hol}(T) = T \rtimes \operatorname{Aut}(T),
\]
where \( \operatorname{Aut}(T) \) acts on \( T \) naturally via \( \alpha(x) \) for \( \alpha \in \operatorname{Aut}(T) \) and \( x \in T \).

For a finite group $T$ and a subset $S\subseteq T$ with $1\notin S$, the \emph{Cayley digraph} $\operatorname{Cay}(T,S)$ of $T$ with respect to $S$ is the digraph with vertex set $T$ and arc set $\{(g,sg)\mid g\in T,\ s\in S\}$. In particular, $\operatorname{Cay}(T,S)$ is connected if and only if $T=\langle S\rangle$. The group $R(T)=\{\sigma_t\mid t\in T\}$ of right multiplications $\sigma_t:x\mapsto xt$ is a subgroup of $\operatorname{Aut}(\Gamma)$ and acts regularly on the vertex set. Indeed, a graph is a Cayley digraph if and only if it admits a regular group of automorphisms. For a Cayley digraph $\Gamma=\operatorname{Cay}(T,S)$, let
\[
\operatorname{Aut}(T,S)=\{\alpha\in\operatorname{Aut}(T)\mid S^\alpha=S\}.
\]
It was shown in \cite{Godsil1983} that the normalizer of $R(T)$ in $\operatorname{Aut}(\Gamma)$ is $R(T)\rtimes\operatorname{Aut}(T,S)$. If $R(T)\trianglelefteq \operatorname{Aut}(\operatorname{Cay}(T,S))$, then $\operatorname{Cay}(T,S)$ is called a \emph{normal Cayley digraph} of $T$ (see \cite{Xu98}). Circulant digraphs are precisely the Cayley digraphs for cyclic groups, a circulant digraph is normal if its automorphism group contains a normal regular subgroup.


We first recall a classic characterization theorem for normal Cayley digraphs.

\begin{lemma}{\rm (\cite[Propositions 1.3 and 1.5]{Xu98})}\label{cayley-normal}
The digraph $\Gamma=\operatorname{Cay}(T,S)$ is a normal Cayley digraph if and only if $\operatorname{Aut}(\Gamma)_1=\operatorname{Aut}(T,S)$, equivalently $\operatorname{Aut}(\Gamma)=T\rtimes\operatorname{Aut}(T,S)$, where $\operatorname{Aut}(\Gamma)_1$ is the stabilizer of the identity vertex.
\end{lemma}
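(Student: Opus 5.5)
This lemma is cited from Xu \cite{Xu98}; I would reprove it from Godsil's result quoted just before it, namely $N_{\operatorname{Aut}(\Gamma)}(R(T))=R(T)\rtimes\operatorname{Aut}(T,S)$, where $\operatorname{Aut}(T,S)$ is viewed as the set of automorphisms of $\Gamma$ fixing the identity vertex $1$. The argument rests on the Frattini-type factorisation $A=R(T)A_1$, with $A=\operatorname{Aut}(\Gamma)$. This factorisation holds because $R(T)$ is transitive on $V(\Gamma)$.

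First I check directly that $\operatorname{Aut}(T,S)\le A_1$. If $\alpha\in\operatorname{Aut}(T)$ with $S^\alpha=S$ and $(g,sg)$ is an arc, then $(g^\alpha,s^\alpha g^\alpha)$ is an arc, and $1^\alpha=1$. Next I check that $\operatorname{Aut}(T,S)$ normalises $R(T)$: $\alpha^{-1}\sigma_t\alpha=\sigma_{t^\alpha}$. So $R(T)\rtimes\operatorname{Aut}(T,S)\le N_A(R(T))$, and Godsil's result gives equality.

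($\Rightarrow$) Suppose $R(T)\trianglelefteq A$. Then $A=N_A(R(T))=R(T)\rtimes\operatorname{Aut}(T,S)$. Intersecting with the stabiliser $A_1$ and using $R(T)\cap A_1=1$ (regularity), I get $A_1=\operatorname{Aut}(T,S)$, by Dedekind's modular law since $\operatorname{Aut}(T,S)\le A_1$.

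($\Leftarrow$) Suppose $A_1=\operatorname{Aut}(T,S)$. Then $A=R(T)A_1=R(T)\operatorname{Aut}(T,S)$. Each factor normalises $R(T)$, so $R(T)\trianglelefteq A$ and $A=R(T)\rtimes\operatorname{Aut}(T,S)$. Finally, if $A=T\rtimes\operatorname{Aut}(T,S)$ (identifying $T$ with $R(T)$), then $R(T)$ is normal by definition of the semidirect product. This closes the three-way equivalence.

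The only delicate point is the identification of $\operatorname{Aut}(T,S)$ with a subgroup of $A_1$ acting on the vertex set. This identification is also what makes the modular-law step valid; the rest is formal.
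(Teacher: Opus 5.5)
Your argument is correct. The paper gives no proof of its own and simply cites Xu; your derivation (Godsil's normaliser theorem $N_A(R(T))=R(T)\rtimes\operatorname{Aut}(T,S)$, the factorisation $A=R(T)A_1$ and the modular law) is the standard argument behind that citation, though one wording fix is needed: in your first paragraph, $\operatorname{Aut}(T,S)$ should be described as \emph{a} subgroup of $A_1$ (as you then verify), not as the set of all automorphisms fixing $1$, since the latter would already presuppose the conclusion.
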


With the above preliminary lemma in hand, we next present a structural classification of connected arc-transitive circulant digraphs, established independently by Kov\'acs \cite{Kovacs-2004} and Li \cite{LCH-circulant-2005}.

\begin{theo}{\rm (\cite[Theorem 1.3]{LCH-circulant-2005})}\label{arccirculant-1}
Let $\Gamma$ be a connected arc-transitive circulant digraph of order $n$ which is not a complete graph and which has a cyclic regular group. Then either
\begin{enumerate}[{\rm (1)}]
\item $\Gamma $ is normal, or
\item there exists an arc-transitive circulant digraph $\Sigma$  of order $m$, such that $ mb=n$ with $m,b\geqslant 2$, and
 \[\Gamma=\left\{
\begin{array}{ll}
\Sigma[\overline{\K_b}], \ \ or \\
\Sigma[\overline{\K_b}]-b\Sigma, (m,b)=1.
\end{array}\right.\]
\end{enumerate}
\end{theo}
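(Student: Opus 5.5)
The plan is to argue by induction on $n$, working with $A=\operatorname{Aut}(\Gamma)$ and its cyclic regular subgroup $R=R(T)\cong\mathbb Z_n$. Throughout I will use the classical facts on permutation groups containing a regular cyclic subgroup. The first is Burnside--Schur: a primitive group of composite degree containing a regular cyclic subgroup is $2$-transitive. The second is the classification, due to Jones and Li, of the quasiprimitive groups containing a regular cyclic subgroup. Such a group is either affine of prime degree $p$, i.e.\ contained in $\mathrm{AGL}(1,p)$, or almost simple and $2$-transitive: $\mathrm A_n$, $\mathrm S_n$, $\mathrm{PGL}(d,q)$-type groups, $\mathrm{PSL}(2,11)$, $\mathrm M_{11}$ or $\mathrm M_{23}$.

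\emph{Primitive case.} If $A$ is $2$-transitive on $V(\Gamma)$, then $\Gamma$ is complete, which is excluded. Otherwise $n=p$ is prime and $A\le \mathrm{AGL}(1,p)$, so $R\trianglelefteq A$ and $\Gamma$ is normal, giving (1).

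\emph{Quasiprimitive but imprimitive case.} The list above shows that every such group is $2$-transitive. Hence this case cannot occur for a non-complete $\Gamma$.

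\emph{Main case: $A$ not quasiprimitive.} Choose a nontrivial intransitive normal subgroup $N\trianglelefteq A$. Its orbits form an $A$-invariant partition $\mathcal B$ into $m$ blocks of size $b$, with $m,b\ge2$. Since $R$ is cyclic, these blocks are the cosets of the unique subgroup of $T$ of order $b$. The quotient $\Gamma_{\mathcal B}$ is a connected arc-transitive circulant digraph of order $m$, with the cyclic regular group $R/(R\cap N)$ acting on it. Let $K$ be the kernel of $A$ on $\mathcal B$.

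There are now two subcases.

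\emph{Subcase (a): $K$ acts unfaithfully on some block in a strong sense.} Suppose the pointwise stabilizer in $K$ of a block $B$ still moves points of an adjacent block $B'$. Arc-transitivity together with the transitivity of $N\le K$ on each block then forces the bipartite subdigraph $\Gamma[B,B']$ to be complete. This yields $\Gamma\cong\Gamma_{\mathcal B}[\overline{\K_b}]$ with $\Sigma=\Gamma_{\mathcal B}$. The standard argument is that every vertex of $B'$ is an out-neighbour of some vertex of $B$, and the independent moves of $K$ on different blocks propagate this to all pairs.

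\emph{Subcase (b): $K$ acts faithfully on each block.} Then $\Gamma[B,B']$ is a $K$-invariant regular bipartite digraph. I will use a minimal choice of $N$ together with the structure of groups having a cyclic regular subgroup, and aim to show that:
\begin{itemize}
\item either $\Gamma[B,B']$ is a perfect matching, so that $\Gamma$ is a normal cover of $\Gamma_{\mathcal B}$;
\item or $\Gamma[B,B']\cong \K_{b,b}-b\K_2$, which gives $\Gamma=\Sigma[\overline{\K_b}]-b\Sigma$.
\end{itemize}
In the second alternative the condition $\gcd(m,b)=1$ will come from requiring $R$ to split as $\mathbb Z_m\times\mathbb Z_b$ compatibly with the matching. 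In the cover alternative I will show that the normality of $\Sigma$, obtained by induction, lifts to $\Gamma$. Concretely, I will show that the full preimage of $R/(R\cap N)$ normalizes $R$, and then use Lemma~\ref{cayley-normal} to obtain $A=R\rtimes\operatorname{Aut}(T,S)$.

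\emph{Main obstacle.} I expect subcase (b) to be the hardest step, namely ruling out intermediate structures of $\Gamma[B,B']$ and lifting normality through the cover. My first approach will be to take $N$ to be a minimal normal subgroup. Such an $N$ is either elementary abelian, and then semiregular because it sits inside a group with a cyclic regular subgroup, or a product of $2$-transitive almost simple groups from the list above. In the semiregular case I will argue that $N\le R$. In the nonabelian case the $2$-transitivity of $N$ on each block forces $\Gamma[B,B']$ to be $\K_{b,b}$ or $\K_{b,b}-b\K_2$, which reduces subcase (b) to the two structures stated in the theorem.
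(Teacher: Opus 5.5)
The paper gives no proof of this statement. It is quoted from Li \cite{LCH-circulant-2005} (and Kov\'acs \cite{Kovacs-2004}), so your proposal has to stand on its own. Your primitive and quasiprimitive cases are fine, given Burnside--Schur and the Jones--Li list. The non-quasiprimitive case, however, rests on local dichotomies for $\Gamma[B,B']$ that are false.

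\emph{Subcase (b).} This fails even for a minimal normal, semiregular $N\le R$. Take $\Gamma=\operatorname{Cay}(\mathbb Z_{35},\{1,29\})\cong C_5\times\vec C_7$. Then $\operatorname{Aut}(\Gamma)\cong D_{10}\times\mathbb Z_7$, the subgroup $N=\langle 7\rangle\cong\mathbb Z_5$ is minimal normal, and the kernel $K\cong D_{10}$ is faithful on each of the seven blocks. But both out-neighbours of a vertex lie in the same block, so $\Gamma[B,B']$ is a $10$-cycle. That is neither a perfect matching nor $\K_{5,5}-5\K_2$, and $\Gamma$ is neither a cover of $\Gamma_{\mathcal B}=\vec C_7$ nor $\vec C_7\times\K_5$. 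This $\Gamma$ is normal, so the theorem survives, but your intermediate claim does not: normal circulants routinely give such multicovers.

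\emph{The cover case.} Even when $\Gamma$ is a genuine cover, the induction hypothesis applied to $\Gamma_{\mathcal B}$ may return a complete graph, a lexicographic product or a deleted lexicographic product. You only explain how to lift normality. For $\Gamma=C_9\times\K_4$, with $\operatorname{Aut}(\Gamma)=D_{18}\times S_4$, let $N$ be the subgroup of order $3$ of $\mathbb Z_9$. Then $\Gamma$ is a normal cover of the non-normal graph $\K_3\times\K_4=\K_3[\overline{\K_4}]-4\K_3$, and one must lift this decomposition to $C_9[\overline{\K_4}]-4C_9$; your plan does not address this. The same graph with $N=V_4\trianglelefteq S_4$ also refutes your claim that a semiregular elementary abelian minimal normal subgroup lies in $R$, since $R\cong\mathbb Z_{36}$ has a cyclic Sylow $2$-subgroup.

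\emph{Subcase (a).} Completeness of $\Gamma[B,B']$ needs $K_{(B)}$ to be transitive on $B'$, and this does not follow from transitivity of $N$. Take $(\vec C_5\times\K_3)[\overline{\K_2}]\cong\operatorname{Cay}(\mathbb Z_{30},\{1,11,16,26\})$, whose automorphism group is $S_2\wr(\mathbb Z_5\times S_3)$. Let $N$ be the preimage of the factor $S_3$, which has five orbits of size $6$. Then $K_{(B)}$ moves points of $B'$, but $\Gamma[B,B']\cong(\K_{3,3}-3\K_2)[\overline{\K_2}]$ and $\Gamma\not\cong\vec C_5[\overline{\K_6}]$.

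\emph{The nonabelian case.} This is also unsupported. $N^B$ need not be quasiprimitive or contain a regular cyclic subgroup, so the Jones--Li list does not apply to it. Even when $N^B$ is $2$-transitive, inequivalent actions on $B$ and $B'$ (points versus hyperplanes) give $N_u$-orbits on $B'$ of sizes other than $1$ and $b-1$. Also, the regular group induced on $\mathcal B$ is $R/(R\cap K)$, not $R/(R\cap N)$.

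\emph{What is missing} is a global ingredient. The lexicographic alternative can be split off directly: if $SL=S$ for a nontrivial subgroup $L\le T$, then $\Gamma\cong\operatorname{Cay}(T/L,S/L)[\overline{\K_{|L|}}]$. When $S$ is not a union of cosets of any nontrivial subgroup, proving that $\Gamma$ is normal or of the form $\Sigma\times\K_b$ with $\gcd(m,b)=1$ is the real content of the theorem. The cited proofs get this from the Leung--Man classification of Schur rings over cyclic groups (Kov\'acs), or from a structure theorem for arbitrary permutation groups containing a regular cyclic subgroup (Li). They do not get it from analysing a single normal quotient.
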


\medskip
The \emph{lexicographic product} $\Gamma_1[\Gamma_2]$ of two digraphs $\Gamma_1$ and $\Gamma_2$ is the digraph with vertex set $V(\Gamma_1)\times V(\Gamma_2)$ such that $(u_1,u_2)$ points to   $(v_1,v_2)$ if and only if either $(u_1,v_1)$ is an arc  of $\Gamma_1$, or $u_1=v_1$ and $(u_2,v_2)$ is an arc  of $\Gamma_2$.

For a positive integer $b$ and a digraph $\Gamma$, the digraph consisting of $b$ vertex-disjoint copies of $\Gamma$ is denoted by $b\Gamma$; the digraph $\Gamma[\overline{\K_b}]-b\Gamma$ has the same vertex set as $\Gamma[\overline{\K_b}]$ and its edge set is obtained by removing all edges of $b\Gamma$ from those of $\Gamma[\overline{\K_b}]$.

For two digraphs $\Gamma_1$ and $\Gamma_2$, the \emph{direct product} (or \emph{tensor product}) $\Gamma_1\times\Gamma_2$ is the digraph with vertex set $V(\Gamma_1)\times V(\Gamma_2)$ such that $(u_1,u_2)$ points to   $(v_1,v_2)$ if and only if $(u_1,v_1)$ is an arc of $\Gamma_1$ and $(u_2,v_2)$ is an arc of $\Gamma_2$. By definition, $\Gamma[\overline{\K_b}]-b\Gamma\cong \Gamma\times\K_b$.

Let $q=p^f$ be a prime power with $q\equiv 3\pmod 4$ and $q\ge 3$. The \emph{Paley tournament} $P(q)$ is the directed graph with vertex set  $\mathbb F_q$ (finite field), in which there is an arc from $x$ to $y$ (denoted $x\to y$) if and only if $y-x$ is a non-zero square in  $\mathbb F_q$. Explicitly,
\[
V(P(q))=\mathbb F_q,\qquad
\operatorname{Arc}(P(q))=\{(x,y)\in\mathbb F_q\times\mathbb F_q\mid y-x\in S\},
\]
where $S=\{s\in\mathbb F_q^\times\mid s\text{ is a square}\}$. Since $-1$ is a non-square when $q\equiv 3\pmod 4$, we have $S\cap(-S)=\varnothing$ and $S\cup(-S)=\mathbb F_q^\times$, hence for any two distinct vertices exactly one of $x\to y$ or $y\to x$ holds, so $P(q)$ is a tournament. Equivalently, $P(q)=\operatorname{Cay}(T,S)$ with $T=(\mathbb F_q,+)$, and its valency is $(q-1)/2$.

The automorphism group of $P(q)$ contains all transformations of the form
\[
x\mapsto a x^\sigma + b,\qquad a\in S,\; \sigma\in\operatorname{Gal}(\mathbb F_q),\; b\in\mathbb F_q,
\]
that is,
\[
T\rtimes (S\rtimes\operatorname{Gal}(\mathbb F_q)) \le \operatorname{Aut}(P(q)).
\]
Here $T$ is the translation group $\{x\mapsto x+b\}$ and $S$ is the multiplicative group of non-zero squares. \emph{Note:} The full affine semilinear group $\operatorname{A\Gamma L}(1,q)$ allows $a\in\mathbb F_q^\times$; when $a\notin S$ the map sends squares to non-squares, reversing the orientation, hence it is not an automorphism of $P(q)$ unless $q=3$.

Concerning local primitivity, the stabilizer $G_0$ of the identity  in such a subgroup $G$ acts on the out-neighbourhood $S$. For the group $G_0=T\rtimes (S\rtimes\operatorname{Gal}(\mathbb F_q))$, the action on $S$ is permutation isomorphic to the left-regular action of the group $S\rtimes\operatorname{Gal}(\mathbb F_q)$ on itself. A left-regular action is primitive if and only if the group has no non-trivial proper subgroups, i.e.\ it is either trivial or of prime order. Therefore $G_0$ is locally primitive precisely when the group $S\rtimes\operatorname{Gal}(\mathbb F_q)$ is trivial or a cyclic group of prime order. This holds for $q=3$ (trivial) and for some small $q$ such as $q=7$ ($S\cong C_3$, $\operatorname{Gal}$ trivial) or $q=11$ ($S\cong C_5$), but fails, for instance, for $q=19$ ($S\cong C_9$, which contains a proper subgroup $C_3$). In particular, the translation group $T$ itself has a trivial action on the neighbourhood and is not locally primitive when $q>3$. Moreover, there is no containment $\operatorname{A\Gamma L}(1,q)\le\operatorname{Aut}(P(q))$ in general.

\begin{lemma}\label{paley-normal}
Let $q=p^f\equiv 3\pmod 4$ be a prime power, $q\ge 3$, let $T=(\mathbb F_q,+)$ and let $S$ be the set of non-zero squares in $\mathbb F_q$. Then the Paley tournament $P(q)=\operatorname{Cay}(T,S)$ is a normal Cayley digraph.
\end{lemma}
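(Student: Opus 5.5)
The plan is to show that $\operatorname{Aut}(P(q))$ is exactly the group $H:=T\rtimes(S\rtimes\operatorname{Gal}(\mathbb F_q))$ exhibited above. The translation group $T$ is normal in $H$, so Lemma~\ref{cayley-normal} then gives normality. Put $A=\operatorname{Aut}(P(q))$, so that $H\le A$.

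\emph{First step: $|A|$ is odd.} Suppose $g\in A$ has order $2$. Since $A$ acts faithfully, some cycle of $g$ has length $2$, say $(x\,y)$ with $x\ne y$. Because $P(q)$ is a tournament, exactly one of $(x,y)$ and $(y,x)$ is an arc. But $g$ maps $(x,y)$ to $(y,x)$ and preserves arcs, which is a contradiction. Hence $|A|$ is odd, and by Feit--Thompson $A$ is soluble.

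\emph{Second step: $A$ is primitive.} It is enough to show that $H$ is primitive on $\mathbb F_q$. Since $T$ is regular on $\mathbb F_q$, this is equivalent to $H_0=S\rtimes\operatorname{Gal}(\mathbb F_q)$ acting irreducibly on $T\cong\mathbb F_p^f$. Any $S$-invariant additive subgroup $W$ of $\mathbb F_q$ is a module over the subring $\mathbb F_p[S]$. This subring is a subfield containing $S$, and $|S|=(q-1)/2$ exceeds $|K^\times|$ for every proper subfield $K$ when $q>3$; hence $\mathbb F_p[S]=\mathbb F_q$. So $W$ is an $\mathbb F_q$-subspace and $W\in\{0,\mathbb F_q\}$. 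The case $q=3$ is trivial, since then $P(3)=\vec C_3$. So $A$ is a soluble primitive group, and by the standard structure theory it is affine: it has a unique minimal normal subgroup $N\cong\mathbb Z_p^f$, which is regular, and $A=N\rtimes A_0$ with $A_0$ acting irreducibly. In particular $O_p(A_0)=1$, and therefore $N=O_p(A)$.

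\emph{Main obstacle: identifying $N$ with $T$.} Both are regular elementary abelian $p$-subgroups, but a priori they might differ. I would try to prove $T\le O_p(A)$. Since $A$ has odd order and $q\equiv 3\pmod 4$, we have $|A_0|_p=|A|_p/q$. The Sylow $p$-subgroups of $H$ have order $q\cdot|\operatorname{Gal}|_p$, and $T$ is normal in $H$.

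Consider first the case where $p\nmid |A_0|$, which holds in particular when $q=p$ is prime. Then $N$ is the unique Sylow $p$-subgroup of $A$, so $T\le N$, and since $|T|=|N|$ we get $T=N$. In general, if this counting argument becomes awkward, I would fall back on the classical theorem of Carlitz/McConnel. It states that every permutation $\varphi$ of $\mathbb F_q$ preserving the relation ``$y-x$ is a nonzero square'' has the form $x\mapsto ax^\sigma+b$. Since $P(q)$ is a tournament, preserving arcs is the same as preserving this relation, so Carlitz/McConnel gives $A=H$ directly, and $T\trianglelefteq A$.

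Either route ends with $R(T)\trianglelefteq\operatorname{Aut}(P(q))$, which shows that $P(q)$ is a normal Cayley digraph.
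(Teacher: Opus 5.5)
Your proof is correct, and it differs from the paper's proof only in the prime case.

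\textbf{The paper's route.} The paper exhibits $T\rtimes H\le A$. It then asserts that every automorphism fixing $0$ is additive, hence of the form $x\mapsto ax^\sigma$ with $a$ a square. That additivity step does not follow from ``$\varphi(x+S)=\varphi(x)+S$ and $S$ generates $T$'' alone; it is really the Carlitz theorem the paper cites.

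\textbf{Your route.} Your group-theoretic argument is sound. An involution would reverse an arc, so $|A|$ is odd and $A$ is soluble. $S$ by itself is irreducible on $T$ because $\mathbb F_p[S]=\mathbb F_q$, so $A$ is primitive and therefore affine with $N=O_p(A)$ regular. When $p\nmid|A_0|$, in particular for $q=p$ prime, the Sylow argument gives $T=N\trianglelefteq A$ without any appeal to Carlitz. That is exactly the case the paper needs, since in Lemma~\ref{paley-exam-1} the group $T$ is cyclic and so $q$ is prime.

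\textbf{The general case.} Here your scaffolding does not identify $N$ with $T$. If $p\mid f$ (e.g.\ $q=27$), then $p$ divides $|H_0|$ and hence $|A_0|$, and the Sylow count gives nothing. For such $q$ your proof reduces to the same Carlitz--McConnel citation as the paper's, which makes your first two steps superfluous there. Your remarks about $|A_0|_p$ and the Sylow subgroups of $H$ lead nowhere as written and should be dropped.

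\textbf{A lighter tool for $q$ prime.} Feit--Thompson is more than you need. By Burnside's theorem, a transitive group of prime degree is either $2$-transitive or contained in $\mathrm{AGL}(1,p)$. No $2$-transitive group can act on a tournament, since it would map some arc $(x,y)$ to $(y,x)$. This gives $T\trianglelefteq A$ directly.
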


\begin{proof}
The definition gives $V(P(q))=\mathbb F_q$ and an arc $(x,y)$ exactly when $y-x\in S$. Since $q\equiv 3\pmod 4$, $-1$ is a non-square, hence $S\cap(-S)=\varnothing$ and $S\cup(-S)=\mathbb F_q^\times$. Thus $P(q)$ is a tournament and, as claimed, it is the Cayley digraph of $T=(\mathbb F_q,+)$ with connection set $S$.

Let $A=\operatorname{Aut}(P(q))$. Consider the subgroup
\[
H=\{\,x\mapsto a x^\sigma\mid a\in(\mathbb F_q^\times)^2,\ \sigma\in\operatorname{Gal}(\mathbb F_q/\mathbb F_p)\,\}\le \operatorname{Aut}(T).
\]
For any $\alpha\in H$, we have $\alpha(S)=S$ because $a$ is a square and $\sigma$ permutes the squares. Therefore the affine maps
\[
\varphi_{b,\alpha}(x)=\alpha(x)+b\qquad (b\in T,\alpha\in H)
\]
satisfy $\varphi(y)-\varphi(x)=\alpha(y-x)\in S\iff y-x\in S$, and hence are automorphisms of $P(q)$. Consequently $T\rtimes H\le A$.

Conversely, let $\varphi\in A$ with $\varphi(0)=0$. Then $\varphi(S)=S$. Since $S$ generates $T$ additively and $\varphi(x+S)=\varphi(x)+S$, one shows that $\varphi$ is additive and hence $\varphi\in\operatorname{Aut}(T)$. The condition $\varphi(S)=S$ forces the linear part of $\varphi$ to be a square times a field automorphism, so $\varphi\in H$, the general case follows by composing with a translation. Hence
 every automorphism of $P(q)$ is of this form:
\[
\operatorname{Aut}(P(q))=T\rtimes H,
\]
see  also \cite{Carlitz,Cameron-1983}.

Thus $R(T)\cong T$ is a normal subgroup of $A$, which means that $P(q)$ is a normal Cayley digraph.
\end{proof}

\bigskip
\bigskip

\section{Proof of Main Theorem}

This section is devoted to determining all locally-quasiprimitive circulant digraphs.

The first lemma shows that a locally-quasiprimitive circulant graph of valency at least $3$ cannot be normal.

\begin{lemma}\label{not normal}
Let $T$ be a cyclic group and $S\subseteq T$ with $\langle S\rangle=T$ and $S^{-1}=S$. Suppose $\Gamma=\operatorname{Cay}(T,S)$ is a locally-quasiprimitive circulant graph of valency at least $3$. Then $\Gamma$ is not normal.
\end{lemma}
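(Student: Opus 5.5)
Suppose, for a contradiction, that $\Gamma$ is normal. By Lemma~\ref{cayley-normal}, $A:=\operatorname{Aut}(\Gamma)=T\rtimes\operatorname{Aut}(T,S)$, and so the vertex stabilizer is $A_1=\operatorname{Aut}(T,S)\le\operatorname{Aut}(T)$. The group $\operatorname{Aut}(T)\cong\mathbb Z_n^{\times}$ is abelian, so $A_1$ is abelian. The plan is to exploit that the action of $A_1$ on the neighbourhood $\Gamma(1)=S$ is the restriction of its natural action on $T$.

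First, $A_1$ acts faithfully on $S$. Indeed, an automorphism of $T$ fixing every element of $S$ fixes $\langle S\rangle=T$ pointwise, and hence is trivial.

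Second, since $\Gamma$ is locally-quasiprimitive, $A_1$ is transitive on $S$. A transitive abelian group is regular. Moreover, every subgroup of an abelian group is normal. So quasiprimitivity forces every non-trivial subgroup of $A_1$ to be transitive on $S$, and therefore to equal $A_1$ by regularity. Hence $A_1$ has no non-trivial proper subgroups, which means $|A_1|=|S|=p$ is a prime. By hypothesis $|S|\ge 3$, so $p$ is odd.

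Third, we derive the contradiction from $S=S^{-1}$. The inversion map $\iota:x\mapsto x^{-1}$ lies in $\operatorname{Aut}(T)$ and preserves $S$, so $\iota\in\operatorname{Aut}(T,S)=A_1$. The map $\iota$ has order $2$ unless $|T|\le 2$, and $|T|\le 2$ is impossible since $|T|>|S|\ge3$. So $A_1$ contains an element of order $2$, contradicting $|A_1|=p$ odd. Alternatively, if $\iota$ acts trivially on $S$, then every element of $S$ is an involution, and $T=\langle S\rangle$ would have at most one involution and order at most $2$, which is again impossible. Either way $\Gamma$ is not normal.

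I expect no serious obstacle. The only delicate point is the faithfulness and regularity argument that reduces quasiprimitivity of an abelian group to prime order.
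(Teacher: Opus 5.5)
Your proof is correct. It reaches the same contradiction as the paper, by a somewhat different route. The shared core is that inversion generates a normal subgroup of order $2$ in the abelian stabilizer $\operatorname{Aut}(T,S)$, which is incompatible with quasiprimitivity on $|S|\ge 3$ points.

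The paper proceeds as follows:
\begin{itemize}
\item it first argues that every element of $S$ generates $T$;
\item it then uses transitivity to find $h\in\operatorname{Aut}(T,S)$ with $h(a)=a^{-1}$ for some $a\in S$, and concludes that $h$ is inversion;
\item finally it applies quasiprimitivity directly to $N=\langle h\rangle$, whose orbits have length at most $2$, giving $|S|\le 2$.
\end{itemize}

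You instead observe that $\iota\in\operatorname{Aut}(T,S)$ simply because $S=S^{-1}$. This removes the need for the generation step and for transitivity to produce $\iota$, and it is a genuine simplification. The paper's justification of that step (``$\langle S\rangle=T$ gives some $s_0\in S$ with $\langle s_0\rangle=T$'') is not valid as stated for cyclic groups of non-prime-power order. For example, $S=\{2,3,4\}\subseteq\mathbb Z_6$ generates $\mathbb Z_6$, yet no element of $S$ does. The paper's conclusion survives only because transitivity forces all elements of $S$ to have the same order, so they lie in a single cyclic subgroup.

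On the other hand, your detour through faithfulness, regularity and $|A_1|=p$ prime is more than you need. That is the argument the paper uses in Lemma~\ref{dicic-normal-1} for the directed case. Once you know $1\ne\iota\in A_1$, the subgroup $\langle\iota\rangle$ is normal in the abelian group $A_1$. It acts nontrivially on $S$ and has orbits of length at most $2$, so it cannot be transitive on $|S|\ge 3$ points, and you are done. Your closing ``alternatively'' remark is also redundant, since faithfulness already excludes $\iota$ acting trivially on $S$.
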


\begin{proof}
Assume, to the contrary, that $\Gamma$ is normal. Let $G=\operatorname{Aut}(\Gamma)$. By Lemma~\ref{cayley-normal}, $G_{1_T}=\operatorname{Aut}(T,S)\le\operatorname{Aut}(T)$. Since $T$ is cyclic, $G_{1_T}$ is abelian.

Local quasiprimitivity implies that $G_{1_T}$ acts quasiprimitively on $S$, and hence every non-trivial normal subgroup of $G_{1_T}$ is transitive on $S$.

First note that every element of $S$ generates $T$. Indeed, $\langle S\rangle=T$ gives some $s_0\in S$ with $\langle s_0\rangle=T$. Transitivity of $G_{1_T}$ on $S$ ensures that for each $s\in S$, there is $\sigma\in G_{1_T}$ with $\sigma(s_0)=s$, automorphisms preserve order, and so $o(s)=|T|$. Thus $T=\langle s\rangle$.

Choose $a\in S$, and write $T=\langle a\rangle\cong\mathbb Z_n$. If $n=2$, then $S=\{a\}$, so $|S|=1$, a contradiction. Hence $n\ge 3$. Since $a^{-1}\in S$ and $a\ne a^{-1}$, transitivity gives $h\in G_{1_T}$ with $h(a)=a^{-1}$. As $a$ generates $T$, we get $h^2=1$. Let $N=\langle h\rangle=\{1,h\}$ of order $2$. Since $G_{1_T}$ is abelian, $N\trianglelefteq G_{1_T}$. By quasiprimitivity, $N$ is transitive on $S$. But $|N|=2$ forces every orbit to have size at most $2$, so $|S|\le 2$, contradicting the assumption that the valency is at least $3$. Therefore $\Gamma$ is not normal.
\end{proof}

\begin{lemma}\label{circulant-lex-1}
Let $T$ be a cyclic group and $S\subseteq T$ with $\langle S\rangle=T$ and $S^{-1}=S$. Suppose $\Gamma=\operatorname{Cay}(T,S)$ is a locally-quasiprimitive circulant graph of valency at least $3$. If $\Gamma\cong \Sigma[\overline{\K_d}]$ for some arc-transitive circulant graph $\Sigma$ and some integer $d\ge 2$, then $\Gamma\cong \K_{m,m}$ where $m=|T|/2$.
\end{lemma}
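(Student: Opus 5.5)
Write $\Gamma=\Sigma[\overline{\K_d}]$ with $|V\Sigma|=m'$ and $n=m'd$, and let $A=\operatorname{Aut}(\Gamma)$. Fix a vertex $v$ lying in the block $B_v$, where the blocks are the copies of $\overline{\K_d}$. Then $\Gamma(v)$ is the union of the blocks $B_w$ over $w\in\Sigma(v)$. Let $k=|\Sigma(v)|$, so the valency is $kd$.

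The key fact to exploit is that $\operatorname{Aut}(\Gamma)$ contains the wreath product $\operatorname{Sym}(d)\wr\operatorname{Aut}(\Sigma)$, whose base group is $\operatorname{Sym}(d)^{m'}$ and acts independently on each block. The plan is to produce a nontrivial normal subgroup of $A_v$ that is intransitive on $\Gamma(v)$, unless $k=1$.

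\textbf{Step 1: a kernel normal subgroup.} Let $K$ be the kernel of the action of $A$ on the block system $\mathcal B=\{B_x\}$. I need $\mathcal B$ to be $A$-invariant. The cleanest way is to use the relation ``$x$ and $y$ have the same neighbourhood''. This is an $A$-invariant equivalence relation whose classes contain the blocks. If the classes are strictly larger, I replace $\mathcal B$ by these twin classes; then $\Gamma$ is still a lexicographic product $\Sigma'[\overline{\K_{d'}}]$ with $d'\ge d$, and the argument below runs unchanged. So assume $\mathcal B$ is the twin partition. Then $K\supseteq\operatorname{Sym}(d)^{m'}$, and $K_v\trianglelefteq A_v$.

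\textbf{Step 2: intransitivity when $k\ge2$.} Suppose $k\ge2$. Then $K_v$ contains $\operatorname{Sym}(d)$ acting on a single block $B_w\subseteq\Gamma(v)$ and trivially elsewhere, so $K_v\neq1$ on $\Gamma(v)$. Since $K$ fixes every block, $K_v$ preserves each block $B_w$ with $w\in\Sigma(v)$, and there are $k\ge2$ such blocks. Hence $K_v$ is intransitive on $\Gamma(v)$, which contradicts local quasiprimitivity. Therefore $k=1$. Note also that $K_v^{\Gamma(v)}\ne1$, since $d\ge2$ means the action on $B_w$ is nontrivial.

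\textbf{Step 3: conclusion.} With $k=1$, $\Sigma$ is a connected circulant graph of valency $1$, because $\Gamma$ is connected and so $\Sigma$ is too. Since $\Sigma$ is a simple undirected graph, this forces $\Sigma\cong\K_2$. Then $\Gamma\cong\K_2[\overline{\K_d}]=\K_{d,d}$ with $d=n/2$, and the valency condition $d\ge3$ is consistent. If Step 1 replaced the blocks by twin classes, the same conclusion holds for the twin decomposition, and it gives $\K_{n/2,n/2}$ again.

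The main obstacle is the $A$-invariance in Step 1. The given decomposition need not be preserved by the full automorphism group, which is why I pass to twin classes. I must check that twin classes form a lexicographic decomposition; this holds because vertices with equal neighbourhoods are nonadjacent in a simple graph, and the quotient by these classes is a graph $\Sigma'$. I must also check that the quotient $\Sigma'$ is still circulant, which follows because $T$ preserves the twin relation. The existence of the base-group automorphisms in $K$ is immediate from the lexicographic structure.
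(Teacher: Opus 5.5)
Your proof is correct and follows essentially the same route as the paper. Both arguments pass to the same-neighbourhood (twin) partition so that the fibres form an $\operatorname{Aut}(\Gamma)$-invariant block system, and both use that the kernel $K_v\trianglelefteq\operatorname{Aut}(\Gamma)_v$ acts nontrivially but fibrewise on $\Gamma(v)$, forcing the quotient to have valency $1$ and hence to be $\K_2$. The only cosmetic difference is that you take twin classes directly in $\Gamma$, whereas the paper takes them in $\Sigma$ and then checks that the resulting fibres are exactly the twin classes of $\Gamma$.
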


\begin{proof}
Suppose $\Gamma\cong \Sigma[\overline{\K_d}]$ for some arc-transitive circulant graph $\Sigma$ and some $d\ge 2$. Define vertices of $\Sigma$ to be equivalent if they have the same neighbourhood. By arc-transitivity, all equivalence classes have the same size, say $t\ge 1$. Let $\mathcal B$ be the set of these classes, and let $\Sigma'$ be the quotient graph $\Sigma_{\mathcal B}$. Then $\Sigma'$ is arc-transitive, no two distinct vertices share the same neighbourhood, and $\Sigma\cong \Sigma'[\overline{\K_t}]$. Consequently,
\[
\Gamma\cong \Sigma'[\overline{\K_t}][\overline{\K_d}]\cong \Sigma'[\overline{\K_{td}}].
\]

Identify $\Gamma$ with $\Sigma'[\overline{\K_{td}}]$, and let $\Omega=\{F_x\mid x\in V(\Sigma')\}$ be the partition into fibres of size $td$ corresponding to the vertices of $\Sigma'$. By definition, each $F_x$ is an independent set, and for distinct $x,y\in V(\Sigma')$, either every vertex of $F_x$ is adjacent to every vertex of $F_y$ (if $\{x,y\}\in E(\Sigma')$) or there is no edge between $F_x$ and $F_y$ (otherwise).

Thus two vertices of $\Gamma$ have exactly the same neighbourhood if and only if they lie in the same fibre $F_x$. The relation $u\sim v\iff \Gamma(u)=\Gamma(v)$ is therefore invariant under $G=\operatorname{Aut}(\Gamma)$, so $\mathcal F=\{F_x\mid x\in V(\Sigma')\}$ is a system of imprimitivity for $G$.

Let $K$ be the kernel of the action of $G$ on $\mathcal F$, i.e. the subgroup consisting of all automorphisms that map each fibre to itself. Then $K\trianglelefteq G$. Because each $F_x$ is independent and adjacency between fibres depends only on $\Sigma'$, any permutation of the vertices inside a single fibre (extended trivially to all others) is an automorphism. Hence $\operatorname{Sym}(F_x)\le K$ for each $x$, and
\[
\prod_{x\in V(\Sigma')}\operatorname{Sym}(F_x)\le K.
\]
Since $d\ge 2$, $K$ is non-trivial.

Fix a vertex $v$, let $F_v$ be its fibre, and let $\Sigma'(F_v)$ denote the neighbourhood of the corresponding vertex in $\Sigma'$. Then
\[
\Gamma(v)=\bigcup_{y\in \Sigma'(F_v)} F_y.
\]
Consider $G_v$ and its normal subgroup $K_v=K\cap G_v$. Since $K$ contains the full symmetric group on each fibre, $K_v$ contains all permutations of fibres different from $F_v$ that fix $F_v$ pointwise; in particular,
\[
\prod_{y\in \Sigma'(F_v)}\operatorname{Sym}(F_y)\le K_v.
\]
Hence the action induced by $K_v$ on $\Gamma(v)$ contains the direct product of the full symmetric groups on each fibre $F_y\subseteq\Gamma(v)$. Because $d\ge 2$, this induced action is non-trivial.

By local quasiprimitivity, $G_v$ acts quasiprimitively on $\Gamma(v)$, so every non-trivial normal subgroup of $G_v$ is transitive on $\Gamma(v)$. Thus $K_v$ must be transitive on $\Gamma(v)$.

Suppose $|\Sigma'(F_v)|\ge 2$. Then $\Gamma(v)$ is a disjoint union of at least two fibres $F_y$. Since $K_v$ stabilizes each fibre individually, it cannot map a vertex from one fibre to another, contradicting transitivity. Hence $|\Sigma'(F_v)|=1$. Consequently, $\Sigma'$ is a connected arc-transitive graph of valency $1$, and so $\Sigma'\cong \K_2$. Therefore
\[
\Gamma\cong \K_2[\overline{\K_{td}}]\cong \K_{td,td},
\]
where $td=|T|/2$.
This completes the proof.
\end{proof}

\begin{lemma}\label{circulant-notminus-1}
Let $T$ be a cyclic group and $S\subseteq T$ with $\langle S\rangle=T$ and $S^{-1}=S$. Suppose $\Gamma=\operatorname{Cay}(T,S)$ is a locally-quasiprimitive circulant graph of valency at least $3$. Suppose $\Gamma=\Sigma[\overline{\K_b}]-b\Sigma$ for some arc-transitive circulant graph $\Sigma$ of order $m\ge 2$, $b\ge 2$ and $\gcd(m,b)=1$. Assume further that $\Gamma\not\cong \Sigma'[\overline{\K_d}]$ for any such $\Sigma'$ and $d\ge 2$. Then either $\Gamma$ is a cycle or $\Gamma\cong \K_{n_1,n_1}-n_1\K_2$, where $n_1=mb/2$ is odd.
\end{lemma}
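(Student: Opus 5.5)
The plan is to use the natural block system coming from the construction $\Gamma=\Sigma[\overline{\K_b}]-b\Sigma\cong\Sigma\times\K_b$. Since $\gcd(m,b)=1$, we have $T\cong\mathbb Z_m\times\mathbb Z_b$. The fibres $F_x=\{x\}\times V(\K_b)$, $x\in V(\Sigma)$, are then the cosets of the subgroup of order $b$.

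\textbf{Step 1: the fibres are blocks of $\operatorname{Aut}(\Gamma)$.} Two vertices $u,u'$ lie in a common fibre exactly when they have no common neighbour pattern of the right size: $|\Gamma(u)\cap\Gamma(u')|$ is governed by the valency $k$ of $\Sigma$ and by $b$. The cleanest route is to pass to a reduced situation. Replace $\Sigma$ by its twin-free quotient as in Lemma~\ref{circulant-lex-1}. If $\Sigma\cong\Sigma''[\overline{\K_t}]$ with $t\ge2$, then $\Gamma$ would itself contain twins, i.e.\ vertices with equal neighbourhoods, and hence be a lexicographic product $\Sigma'[\overline{\K_d}]$, which is excluded by hypothesis. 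So I would first show that $\Gamma$ is twin-free. I would then find an $\operatorname{Aut}(\Gamma)$-invariant relation, for example \emph{having exactly $k(b-2)$ common neighbours}, that singles out the fibres. Where this is delicate I would fall back on the argument in the spirit of Lemma~\ref{circulant-lex-1}, applied to the subgroup generated by the fibre-preserving automorphisms. I expect this step to be the main obstacle, since the fibre structure is not obviously canonical for small $k$ or $b$.

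\textbf{Step 2: use local quasiprimitivity on the kernel.} Let $K$ be the kernel of $G=\operatorname{Aut}(\Gamma)$ on the fibre set $\mathcal F$. We have $\operatorname{Sym}(b)\le K$, acting diagonally as $\operatorname{Aut}(\K_b)$ on the second coordinate. Fix $v=(x,i)$. Then $\Gamma(v)=\bigcup_{y\in\Sigma(x)}(F_y\setminus\{(y,i)\})$. The stabiliser $K_v$ contains the diagonal $\operatorname{Sym}(b-1)$, and $K_v\trianglelefteq G_v$. It acts nontrivially on $\Gamma(v)$ when $b\ge3$, so quasiprimitivity forces $K_v$ to be transitive on $\Gamma(v)$. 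Since $K_v$ fixes every fibre, this forces $|\Sigma(x)|=1$, so $\Sigma\cong\K_2$ (connectedness). Hence $\Gamma\cong\K_2\times\K_b\cong\K_{b,b}-b\K_2$, with $m=2$ and $b$ odd because $\gcd(2,b)=1$; thus $n_1=b$ is odd, as claimed.

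\textbf{Step 3: the case $b=2$.} Here $m$ is odd and $\Gamma\cong\Sigma\times\K_2$ is the canonical double cover of $\Sigma$. Now $K$ contains the swap $\tau$ of the two copies, which is a central involution of $\langle G_v, \tau\rangle$. I would consider $K_v$ directly. If $K_v\ne1$, then since it fixes all fibres, which here have size $2$ and meet $\Gamma(v)$ in one point each, $K_v$ must act trivially on $\Gamma(v)$; connectivity then gives $K_v=1$. Hence $G_v$ embeds in $(G/K)_{F_x}\le\operatorname{Aut}(\Sigma)_x$, acting on $\Gamma(v)\leftrightarrow\Sigma(x)$ equivalently. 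So $\Sigma$ is itself a locally-quasiprimitive arc-transitive circulant graph of odd order $m$. I would then argue by induction on order, applying Lemma~\ref{not normal}, Lemma~\ref{circulant-lex-1}, this lemma, and Theorem~\ref{arccirculant-1} to $\Sigma$. If the valency of $\Sigma$ is at least $3$, then $\Sigma$ is not normal, and it cannot be $\K_{m/2,m/2}$ or of the other even-order forms since $m$ is odd, so $\Sigma\cong\K_m$. In that case $\Gamma\cong\K_{m,m}-m\K_2$ with $m=n_1$ odd. If the valency is $2$, then $\Sigma=C_m$ and $\Gamma\cong C_{2m}$ is a cycle. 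This yields exactly the two outcomes of the statement.
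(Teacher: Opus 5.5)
Your Steps 2 and 3 are correct \emph{once} the fibre partition $\mathcal F=\{F_x\}$ is known to be $\operatorname{Aut}(\Gamma)$-invariant. Step~1, which is supposed to supply this, is not proved, and everything afterwards depends on it: the normality of the kernel $K$ (hence $K_v\trianglelefteq G_v$ in Step~2) and the embedding $G_v\hookrightarrow\operatorname{Aut}(\Sigma)_x$ in Step~3. Unlike the twin relation in Lemma~\ref{circulant-lex-1}, the tensor fibres are not defined by an evidently canonical property, and your proposed invariant does not single them out:
\begin{itemize}
\item For $b=2$, ``exactly $k(b-2)=0$ common neighbours'' holds for \emph{every} pair of vertices in opposite parts of the bipartite graph $\Gamma$.
\item For $b\ge 3$, vertices $(x,i),(x',i)$ with $x\ne x'$ have $\lambda(b-1)$ common neighbours, where $\lambda=|\Sigma(x)\cap\Sigma(x')|$. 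This equals $k(b-2)$ whenever $\lambda=k(b-2)/(b-1)$. For example, take $\Sigma$ the Paley graph of order $13$ ($k=6$, non-adjacent pairs have $3$ common neighbours) and $b=3$.
\end{itemize}
The fallback via ``the subgroup generated by the fibre-preserving automorphisms'' does not help either. That subgroup is not known to be normal in $G$, and its normal closure need not preserve the fibres, so the key step ``$K_v$ fixes every fibre meeting $\Gamma(v)$'' is lost. For $b=2$ the missing statement is exactly that $\Sigma$ is stable, i.e.\ $\operatorname{Aut}(\Sigma\times\K_2)=\operatorname{Aut}(\Sigma)\times\mathbb Z_2$. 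This is the nontrivial stability problem for canonical double covers. It can be settled for arc-transitive circulants, but only by a substantial separate argument, not by a local counting criterion.

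The paper avoids this obstacle by invoking the explicit characterization of Li, Xia and Zhou \cite{LXZ-at-circ-2021}. That result gives not only $\Gamma\cong(\Gamma_0\times\K_{n_1}\times\cdots\times\K_{n_r})[\overline{\K_{b'}}]$ but also the full group $\operatorname{Aut}(\Gamma)\cong S_{b'}\wr(\operatorname{Aut}(\Gamma_0)\times S_{n_1}\times\cdots\times S_{n_r})$. The no-lexicographic-product hypothesis forces $b'=1$, so $\operatorname{Aut}(\Gamma)_v$ is a direct product whose factors $(S_{n_i})_{v_i}$ are normal and move only one coordinate. Quasiprimitivity then forces $r\le 1$, and $\Gamma_0\cong\K_2$ when $r=1$; the case $r=0$ makes $\Gamma$ normal and is dispatched by the earlier non-normality lemma. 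To rescue your route you need comparable external input: either identify $\operatorname{Aut}(\Gamma)$, or at least show that the orbits of the order-$b$ subgroup of $T$ form an $\operatorname{Aut}(\Gamma)$-invariant partition. With that in hand, your kernel argument for $b\ge 3$ and your reduction to $\Sigma$ for $b=2$ go through.
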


\begin{proof}
First observe that $\Gamma\cong \Sigma\times\K_b$.
By \cite[Theorem~1]{LXZ-at-circ-2021}, there exist a connected arc-transitive normal circulant graph $\Gamma_0$ of order $n_0$ and positive integers $n_1,\dots,n_r,b'$ (with $r\ge 0$) such that
\begin{enumerate}[(i)]
  \item $\Gamma_0\not\cong C_4$;
  \item $n_i\ge 4$ for $i=1,\dots,r$;
  \item $|V(\Gamma)|=n_0n_1\cdots n_r b'$, and $n_0,n_1,\dots,n_r$ are pairwise coprime;
  \item $\Gamma\cong (\Gamma_0\times\K_{n_1}\times\cdots\times\K_{n_r})[\overline{\K_{b'}}]$;
  \item $\operatorname{Aut}(\Gamma)\cong S_{b'}\wr(\operatorname{Aut}(\Gamma_0)\times S_{n_1}\times\cdots\times S_{n_r})$.
\end{enumerate}
By our hypothesis that $\Gamma$ cannot be written as a lexicographic product $\Sigma'[\overline{\K_d}]$ with arc-transitive $\Sigma'$ and $d\ge 2$, we deduce that either $n_0n_1\cdots n_r=1$ or $b'=1$. The former is impossible because $\Gamma$ is connected and has at least $4$ vertices, so $b'=1$. Hence
\[
\Gamma\cong \Gamma_0\times\K_{n_1}\times\cdots\times\K_{n_r}.
\]

If $n_1n_2\cdots n_r=1$, then $\Gamma$ is a normal circulant graph. By Lemma~\ref{not normal}, its valency is $2$, so $\Gamma$ is a cycle $C_{mb}$.

Now assume $n_1n_2\cdots n_r>1$. Take an arbitrary vertex
\[
v=(v_0,v_1,\dots,v_r)\in V(\Gamma)=V(\Gamma_0)\times V(\K_{n_1})\times\cdots\times V(\K_{n_r}).
\]
From condition (v) with $b'=1$,
\[
\operatorname{Aut}(\Gamma)_v=\operatorname{Aut}(\Gamma_0)_{v_0}\times \operatorname{Aut}(\K_{n_1})_{v_1}\times\cdots\times\operatorname{Aut}(\K_{n_r})_{v_r}.
\]
Since $\operatorname{Aut}(\K_{n_1})_{v_1}$ is a non-trivial normal subgroup of $\operatorname{Aut}(\Gamma)_v$ (as a direct factor), local quasiprimitivity forces it to be transitive on $\Gamma(v)$. This is possible only if $r=1$ and $n_0=2$. Thus
\[
\Gamma\cong \K_2\times\K_{n_1}\cong \K_{n_1,n_1}-n_1\K_2.
\]
Since $n_0=2$ and $\gcd(n_0,n_1)=1$, $n_1$ is odd. This completes the proof.
\end{proof}

The following lemma determines all normal circulant digraphs (with no bidirectional arcs).

\begin{lemma}\label{dicic-normal-1}
Let $T$ be a cyclic group and $S\subseteq T$ with $S^{-1}\cap S=\varnothing$ and $T=\langle S\rangle$. Let $\Gamma=\operatorname{Cay}(T,S)$ be a $G$-locally-quasiprimitive circulant digraph of order $n\ge 3$ and valency at least $2$. If the right regular representation $R(T)$ is normal in $G$, then $\Gamma$ is a normal locally-primitive circulant digraph of prime valency. More precisely, there exist a prime $p\ge 2$, an element $b\in T\setminus\{1\}$, and an automorphism $\mu\in\operatorname{Aut}(T)$ of order $p$ such that
\[
S=b^{\langle\mu\rangle}=\{\,b,b^\mu,b^{\mu^2},\dots,b^{\mu^{p-1}}\,\},
\]
$\langle S\rangle=T$, $S\cap S^{-1}=\varnothing$, and
$G=T\rtimes\langle\mu\rangle\le \operatorname{Hol}(T)$.
\end{lemma}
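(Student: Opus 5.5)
Since $R(T)\trianglelefteq G$ and $R(T)$ is regular, the normalizer result of Godsil quoted before Lemma~\ref{cayley-normal} gives $G\le R(T)\rtimes\operatorname{Aut}(T,S)$, and so $G=R(T)\rtimes H$ with $H=G_{1_T}\le\operatorname{Aut}(T,S)\le\operatorname{Aut}(T)$. Since $T$ is cyclic, $\operatorname{Aut}(T)\cong\mathbb Z_n^\times$ is abelian, hence $H$ is abelian. The plan is to exploit that an abelian quasiprimitive group is regular of prime degree, and to recover the description of $S$ from this.

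First, $H$ is transitive on $S$, since local quasiprimitivity includes transitivity. Second, $H$ acts faithfully on $S$: an automorphism of $T$ fixing $S$ pointwise fixes $\langle S\rangle=T$ pointwise, so it is trivial. Hence $H\cong H^{S}$ is a transitive abelian permutation group, and therefore regular on $S$; in particular $|H|=|S|$. Next I use quasiprimitivity. In an abelian group every subgroup is normal, so every nontrivial subgroup $L\le H$ must be transitive on $S$. But $L$ is semiregular, so its orbits have size $|L|$, and transitivity forces $|L|=|S|=|H|$. Thus $H$ has no nontrivial proper subgroups, so $H=\langle\mu\rangle$ is cyclic of prime order $p$. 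The hypothesis that the valency is at least $2$ rules out $H=1$, and $p=|S|$. Regularity is also why the action is in fact primitive, which justifies the phrase ``locally-primitive'' in the statement.

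Now pick $b\in S$. Then $S=b^{H}=\{b,b^\mu,\dots,b^{\mu^{p-1}}\}$, where $b\neq 1$ because $1\notin S$. The conditions $\langle S\rangle=T$ and $S\cap S^{-1}=\varnothing$ are hypotheses, and $G=T\rtimes\langle\mu\rangle\le\operatorname{Hol}(T)$ follows from the identification above. It remains to justify calling $\Gamma$ ``normal'', i.e.\ that the statement concerns $G$ itself. If the intent is $G=\operatorname{Aut}(\Gamma)$, Lemma~\ref{cayley-normal} applies directly. Otherwise the claim should be read as describing $G$, and I would phrase the conclusion as $R(T)\trianglelefteq G$ with $G$ as given.

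The only delicate step is the faithfulness of $H$ on $S$ together with the deduction ``abelian and transitive implies regular''. The standard argument runs as follows: a point stabilizer $H_s$ fixes every point $s^h$, because $H_s=H_{s^h}$ in an abelian group. Hence $H_s$ lies in the kernel, which is trivial. Everything else is routine.
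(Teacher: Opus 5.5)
Your proof is correct and follows the paper's argument essentially step for step: transitivity, faithfulness because $\langle S\rangle=T$, abelian and transitive hence regular, every subgroup normal hence transitive, so $G_{1_T}$ has prime order. The only cosmetic difference is that you bound $G$ by Godsil's normalizer $R(T)\rtimes\operatorname{Aut}(T,S)$ inside $\operatorname{Aut}(\Gamma)$, where the paper uses $N_{\operatorname{Sym}(T)}(R(T))=\operatorname{Hol}(T)$. Your caveat about the word ``normal'' is also well founded: the paper's proof never addresses it, and it genuinely requires $G=\operatorname{Aut}(\Gamma)$ (for instance $T=\mathbb Z_9$, $S=\{1,4,7\}$, $G=T\rtimes\langle x\mapsto 4x\rangle$ satisfies all the hypotheses, yet $\Gamma\cong\vec C_3[\overline{\K_3}]$ is not normal), and $G=\operatorname{Aut}(\Gamma)$ is exactly the setting in which the lemma is applied in the proof of the main theorem.
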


\begin{proof}
Identify $T$ with its right regular representation $R(T)\le\operatorname{Sym}(T)$. Since $R(T)\trianglelefteq G$ and $T$ is cyclic (hence abelian), the normaliser of $R(T)$ in $\operatorname{Sym}(T)$ is the holomorph $\operatorname{Hol}(T)=T\rtimes\operatorname{Aut}(T)$. Thus $G\le\operatorname{Hol}(T)$ and we may write
\[
G=T\rtimes G_1,
\]
where $G_1=\operatorname{Stab}_G(1_T)\le\operatorname{Aut}(T)$. Since $T$ is cyclic, $\operatorname{Aut}(T)$ is abelian, so $G_1$ is abelian.

The out-neighbourhood of $1_T$ is $S$, and $\langle S\rangle=T$. Local quasiprimitivity implies that $G_1$ acts quasiprimitively on $S$, in particular transitively.

Any element of $G_1$ fixing every element of $S$ fixes all of $T$ (since $S$ generates $T$ and $G_1\le\operatorname{Aut}(T)$), so it is the identity. Thus the action of $G_1$ on $S$ is faithful. Since $G_1$ is abelian and transitive on $S$, it acts regularly on $S$. Hence $|G_1|=|S|\ge 2$.

As a regular permutation group on $S$, every non-trivial normal subgroup $N\trianglelefteq G_1$ has orbits of length $|N|$. Quasiprimitivity forces every such $N$ to be transitive on $S$, so $|N|=|S|=|G_1|$ and $N=G_1$. Hence $G_1$ has no proper non-trivial subgroups, so it is cyclic of prime order $p$. Thus the valency is the prime $p$.

Choose a generator $\mu$ of $G_1=\langle\mu\rangle$; then $\mu\in\operatorname{Aut}(T)$ has order $p$. Because $G_1$ is transitive on $S$, fixing any $b\in S$ gives
\[
S=b^{\langle\mu\rangle}=\{\,b,b^\mu,b^{\mu^2},\dots,b^{\mu^{p-1}}\,\}.
\]
The properties $\langle S\rangle=T$ and $S\cap S^{-1}=\varnothing$ hold by hypothesis. Finally,
\[
G=T\rtimes\langle\mu\rangle\le T\rtimes\operatorname{Aut}(T)=\operatorname{Hol}(T),
\]
as required.
\end{proof}

A prime $p$ is called a \emph{safe prime} if there exists a prime $q$ such that $p=2q+1$; equivalently, $p$ is prime and $(p-1)/2$ is also prime.

\begin{lemma}\label{paley-exam-1}
Let $T$ be cyclic and $S\subseteq T$ with $S^{-1}\cap S=\varnothing$ and $T=\langle S\rangle$. Suppose $\Gamma=\operatorname{Cay}(T,S)$ is a $G$-locally-quasiprimitive circulant digraph of valency at least $2$ and $R(T)$ is normal in $G$. Then  $\Gamma$ is isomorphic to a Paley tournament $P(q)$ if and only if the following hold:
\begin{enumerate}[{\rm (1)}]
    \item $|T|=q$ is a safe prime and $q\equiv 3\pmod 4$;
    \item $|S|=(q-1)/2$ is a prime;
    \item $S$ is exactly the set of non-zero squares in $\mathbb F_q$.
\end{enumerate}
\end{lemma}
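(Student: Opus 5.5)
We prove the two directions separately. Throughout we use Lemma~\ref{dicic-normal-1}: since $R(T)\trianglelefteq G$ and $\Gamma$ is $G$-locally-quasiprimitive, $G=T\rtimes\langle\mu\rangle$ with $|\mu|=p$ prime, $|S|=p$, and $S=b^{\langle\mu\rangle}$ is a single regular orbit of $\langle\mu\rangle\le\operatorname{Aut}(T)$.

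\emph{Sufficiency.} Assume (1)--(3). Identify $T=\mathbb Z_q=(\mathbb F_q,+)$, which is possible because $|T|=q$ is prime. Then $\Gamma=\operatorname{Cay}(\mathbb F_q,S)$ with $S$ the set of non-zero squares, which is $P(q)$ by definition. The condition $q\equiv 3\pmod 4$ makes $P(q)$ a tournament and agrees with $S\cap S^{-1}=\varnothing$. Condition (2) agrees with the prime valency forced by Lemma~\ref{dicic-normal-1}. As a consistency check, take $\mu$ to be multiplication by a generator of the squares, which has order $(q-1)/2$. Then $\operatorname{Aut}(T,S)=\langle\mu\rangle$, since $\operatorname{Gal}$ is trivial for prime $q$, and by Lemma~\ref{paley-normal} $\operatorname{Aut}(P(q))=T\rtimes\langle\mu\rangle$, which is the form above.

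\emph{Necessity.} Suppose $\Gamma\cong P(q)$ with $q=p_0^f\equiv3\pmod4$.

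First, $|T|=q$, and $T$ is cyclic. By Lemma~\ref{paley-normal}, $\operatorname{Aut}(P(q))=\mathbb F_q\rtimes H$, where $H$ consists of the semilinear maps $x\mapsto ax^\sigma$ with $a$ a square. Its Sylow $p_0$-subgroup meets the translation group in an elementary abelian group of order $q$. We want to show that a cyclic regular subgroup forces $f=1$, so that $q$ is prime. The cleanest route: a regular cyclic subgroup of order $q$ in $\mathbb F_q\rtimes H$ would be a $p_0$-group. Since $|H|=\tfrac{q-1}{2}f$, we get $|\operatorname{Aut}(P(q))|_{p_0}=q\cdot f_{p_0}$. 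An element of order $q$ would therefore need $q\mid q f_{p_0}$ together with an element of order $p_0^f$ in $\mathbb F_q\rtimes\operatorname{Gal}$. I expect this step to be the main obstacle. I would try to avoid it by a counting argument: the regular $p_0$-subgroup is contained in a Sylow subgroup, and a regular cyclic $p_0$-group must avoid fixed points, which forces $f$ small. Alternatively, since the hypothesis includes that $R(T)$ is normal in $G$, it suffices to observe that the unique minimal normal subgroup of $\mathbb F_q\rtimes H$ is the translation group. Then $T=\mathbb F_q^+$ must be elementary abelian and cyclic, so $f=1$ and $q$ is prime.

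Second, $S$ is the set of squares. Under the identification $T=\mathbb F_q$, the isomorphism can be taken to send $0\mapsto0$ after composing with a translation. By normality it normalises $T$, hence it is an element of $\operatorname{Aut}(T)=\mathbb F_q^\times$, that is, multiplication by some $c$. So $S=c\cdot\{\text{squares}\}$. Replacing $c$ by $c^{-1}$, or noting that $\operatorname{Cay}(T,cQ)\cong\operatorname{Cay}(T,Q)$ via this automorphism, we may normalise to $S=Q$ up to the identification. This gives (3).

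Third, $|S|=(q-1)/2$ is the valency. By Lemma~\ref{dicic-normal-1} it is prime, which gives (2). Combined with $q$ prime, $q$ is a safe prime. Finally, $q\equiv3\pmod4$ holds by assumption, and indeed $(q-1)/2$ is odd unless $q=5$, which is excluded by $q\equiv3\pmod 4$. This gives (1).
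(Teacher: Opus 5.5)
\textbf{There is a genuine gap: your proof that $q$ is prime does not work as written.} The rest of your outline follows the paper's proof. Sufficiency is immediate from the definition of $P(q)$. For necessity, the steps are $q$ prime, $S$ equal to the squares, and $(q-1)/2$ prime by Lemma~\ref{dicic-normal-1}.

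The paper simply asserts that $q$ is prime because $T$ is cyclic while $\mathbb F_q^+$ is elementary abelian. You rightly see that this needs an argument, since a digraph isomorphism does not transport group structure. However, neither of your two attempts supplies one.

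\begin{itemize}
\item Your first route stops exactly where the work is: it never bounds the orders of $p_0$-elements of $\mathbb F_q\rtimes H$.
\item Your second route is invalid. It needs $R(T)$ to be normal in the \emph{full} group $\operatorname{Aut}(\Gamma)\cong\mathbb F_q\rtimes H$, but the hypothesis only gives $R(T)\trianglelefteq G$ for some $G\le\operatorname{Aut}(\Gamma)$. By Lemma~\ref{dicic-normal-1}, $|G|=q\cdot\frac{q-1}{2}$, while $|\operatorname{Aut}(P(q))|=q\cdot\frac{q-1}{2}\cdot f$. So in precisely the case $f>1$ you want to exclude, $G$ has index $f$. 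A subgroup normal in $G$ need not contain the unique minimal normal subgroup of $\operatorname{Aut}(P(q))$.
\end{itemize}

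This case is not vacuous a priori: $q=27\equiv 3\pmod 4$ has $(q-1)/2=13$ prime. The same conflation appears in your second step, where the isomorphism is said to normalise $T$ ``by normality''.

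\textbf{How to close the gap.} The simplest fix is to use normality in $G$ as it is given. Then $G_1\le\operatorname{Aut}(T)$ is transitive on $S$, so $(q-1)/2=|S|$ divides $|\operatorname{Aut}(T)|=\varphi(p_0^f)=p_0^{f-1}(p_0-1)$. Being coprime to $p_0$, it divides $p_0-1$. Hence $p_0^f\le 2p_0-1$, which forces $f=1$.

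Alternatively, you can complete your first route. A cyclic $p_0$-subgroup $C$ of $\mathbb F_q\rtimes H$ satisfies $|C\cap\mathbb F_q|\le p_0$, because that intersection is both cyclic and elementary abelian. It also satisfies $|C\mathbb F_q/\mathbb F_q|\le f_{p_0}$, the $p_0$-part of $|H|=\frac{q-1}{2}f$. So a regular cyclic $C$ would give $p_0^f\le p_0 f$, which is impossible for $f\ge 2$ and $p_0\ge 3$.

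\textbf{Repairing your second step.} Once $q$ is prime, $|\operatorname{Aut}(P(q))|=q(q-1)/2$, so the translation group is its unique Sylow $q$-subgroup. Any isomorphism $\Gamma\to P(q)$ therefore conjugates $R(T)$ onto it. After composing with a translation sending $1_T\mapsto 0$, it becomes a group isomorphism $T\to\mathbb F_q^+$ mapping $S=\Gamma^+(1_T)$ onto the non-zero squares. This proves (3) rigorously without appealing to normality in $\operatorname{Aut}(\Gamma)$.
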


\begin{proof}
Suppose first that $\Gamma\cong P(q)$ for a prime power $q\equiv 3\pmod 4$. The vertex set of $P(q)$ is the additive group of $\mathbb F_q$, which is elementary abelian. Since we assume the vertex set is cyclic, $q$ must be prime and $T\cong\mathbb F_q$. Thus $n=q$ is prime and $q\equiv3\pmod4$. The connection set is the set of non-zero squares in $\mathbb F_q^\times$, of size $(q-1)/2$. Since the valency is the prime $p$, we get $p=(q-1)/2$. Finally, the non-zero squares form a subgroup of order $p$. Let $g$ be a generator of this subgroup and define $\mu(x)=gx$ for $x\in\mathbb F_q$. Then $\mu\in\operatorname{Aut}(T)$ has order $p$, and taking $b=g$ yields $S=b^{\langle\mu\rangle}$.

Conversely, assume the three conditions hold. Then $T\cong(\mathbb F_q,+)$ is cyclic of prime order $q\equiv3\pmod4$, and $S$ is the set of non-zero squares in $\mathbb F_q$. The Cayley digraph $\operatorname{Cay}(T,S)$ is, by definition, the Paley tournament $P(q)$. The properties $S^{-1}\cap S=\varnothing$ and $\langle S\rangle=T$ are automatic because $-1$ is a non-square and the squares generate the additive group of the prime field.
\end{proof}

\begin{lemma}\label{dig-not-normal-1}
Let $T$ be a cyclic group and $S\subseteq T$ with $S^{-1}\cap S=\varnothing$ and $T=\langle S\rangle$. Let $\Gamma=\operatorname{Cay}(T,S)$ be a  locally-quasiprimitive circulant digraph of order $n\ge 3$ and valency at least $2$. If $\Gamma\cong \Sigma[\overline{\K_d}]$ for some arc-transitive circulant digraph $\Sigma$ and some $d\ge 2$, then
\[
\Gamma\cong \vec{C}_m[\overline{\K_b}]
\]
for some integers $m\ge 3$ and $b\ge 2$ with $mb=n$.
\end{lemma}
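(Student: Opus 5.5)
We mimic the proof of Lemma~\ref{circulant-lex-1}, now working with out-neighbourhoods. The plan has three steps: reduce $\Sigma$ to a digraph in which no two vertices share an out-neighbourhood, use local quasiprimitivity to force that reduced digraph to have out-valency $1$, and then identify it.

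\textbf{Step 1: reducing $\Sigma$.} Call two vertices of $\Sigma$ equivalent when they have the same out-neighbourhood. Since $\operatorname{Aut}(\Sigma)$ is transitive on vertices, all classes have a common size $t$. We need the class structure to be compatible with lexicographic products, so that $\Sigma\cong\Sigma'[\overline{\K_t}]$ where $\Sigma'$ is the quotient. For this, the classes must also be sets of vertices with equal in-neighbourhoods. We check this by counting. In $\Sigma$, the in-neighbourhoods of $u$ and $v$ differ only when some $w$ points to $u$ but not to $v$. Since out-neighbourhoods are unions of classes and every vertex has the same out-valency, the out-neighbourhood of a vertex is a union of whole classes. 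Therefore $w\to u$ implies $w\to v$ whenever $u\sim v$, so in-neighbourhoods agree as well. Hence $\Gamma\cong\Sigma'[\overline{\K_{td}}]$, where $\Sigma'$ is vertex-transitive (indeed arc-transitive as a quotient) and distinct vertices of $\Sigma'$ have distinct out-neighbourhoods.

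\textbf{Step 2: the kernel is a normal subgroup with too many orbits.} In $\Gamma$, two vertices have the same out-neighbourhood if and only if they lie in the same fibre $F_x$. Here the reverse direction uses that $\Sigma'$ has no twins. So the fibre partition $\mathcal F$ is $G$-invariant, with $G=\operatorname{Aut}(\Gamma)$. Each fibre is independent, and adjacency between two fibres is all-or-nothing in a fixed direction, so $\prod_x\operatorname{Sym}(F_x)\le K$, the kernel of $G$ on $\mathcal F$.

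Let $K_v=K\cap G_v$, a normal subgroup of $G_v$. We want $K_v$ to act nontrivially on $\Gamma^+(v)$. One fibre $F_y$ inside $\Gamma^+(v)$ differs from $F_v$, because there are no loops and fibres are independent. Since $td\ge 2$, $\operatorname{Sym}(F_y)\le K_v$ acts nontrivially on it. Local quasiprimitivity then forces $K_v$ to be transitive on $\Gamma^+(v)=\bigcup_{y\in\Sigma'^+(x)}F_y$. But $K_v$ fixes every fibre, so $|\Sigma'^+(x)|=1$.

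\textbf{Step 3: identifying $\Sigma'$.} Now $\Sigma'$ is a connected vertex-transitive digraph of out-valency $1$, so it is a directed cycle $\vec C_m$. We must rule out that it is $\K_2$ viewed as a graph, which would make $\Gamma\cong\K_{n/2,n/2}$ and violate $S\cap S^{-1}=\varnothing$. The arcs of $\Gamma$ are not bidirectional, so $\Sigma'$ is not symmetric, and therefore $m\ge 3$. With $b=td\ge 2$ and $mb=n$, we get $\Gamma\cong\vec C_m[\overline{\K_b}]$.

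The main obstacle is Step 1, namely making sure the out-neighbourhood equivalence classes are genuinely lexicographic blocks in the directed setting. The argument above handles it by showing that out-neighbourhoods are unions of classes. If that argument turned out to be delicate, the fallback would be to work directly with the partition of $\Gamma$ into classes of vertices with equal out-neighbourhoods and to verify by hand that $\Gamma$ is the lexicographic product of its quotient.
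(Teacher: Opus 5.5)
Your overall route is the paper's, and Steps 2 and 3 are fine; the gap is in Step 1. You define $u\sim v$ by $\Sigma^+(u)=\Sigma^+(v)$ only. To get $\Sigma\cong\Sigma'[\overline{\K_t}]$ you then need every out-neighbourhood to be a union of $\sim$-classes, i.e.\ out-twins must also be in-twins. Your justification (``since out-neighbourhoods are unions of classes \dots\ the out-neighbourhood of a vertex is a union of whole classes'') assumes exactly what it should prove.

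No counting argument from vertex-transitivity and constant out-valency can fill this, because the claim is false for vertex-transitive digraphs in general. In $\operatorname{Cay}(L,R)$, with arcs $(g,rg)$, the out-twin class of $g$ is $\{x\in L: Rx=R\}g$ and the in-twin class is $\{x\in L: xR=R\}g$. Take $L=\operatorname{Sym}(3)$ and $R=(13)\langle(12)\rangle$. The two stabilisers are then $\langle(12)\rangle$ and $\langle(23)\rangle$. Each out-neighbourhood $Rg$ is a right coset of $\langle(23)\rangle$, so it meets each out-twin class in at most one vertex.

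There are two easy repairs. The first uses that $\Sigma$ is a Cayley digraph of an \emph{abelian} group. Then $Sx=S$ iff $x^{-1}S^{-1}=S^{-1}$ iff $S^{-1}x^{-1}=S^{-1}$, so the two stabilisers coincide and out-twins are in-twins. The second is the paper's choice: define $u\sim v$ by equality of both $\Sigma^+$ and $\Sigma^-$. Then $x\to y$, $x\sim x'$, $y\sim y'$ give $x'\to y$ (equal out-neighbourhoods) and then $x'\to y'$ (equal in-neighbourhoods). No class contains an arc, since that would produce a loop. So $\Sigma\cong\Sigma'[\overline{\K_t}]$ holds for any digraph.

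With either fix, the fibres of $\Sigma'[\overline{\K_{td}}]$ are exactly the twin classes of $\Gamma$, hence form an $\operatorname{Aut}(\Gamma)$-invariant partition. The rest of your argument then goes through unchanged and coincides with the paper's proof: $K_v\trianglelefteq G_v$ contains $\prod_{F_y\subseteq\Gamma^+(v)}\operatorname{Sym}(F_y)$, so it must be transitive on $\Gamma^+(v)$. That forces out-valency $1$ in $\Sigma'$, and antisymmetry then gives $\Sigma'\cong\vec{C}_m$ with $m\ge 3$.
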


\begin{proof}
Assume $\Gamma\cong \Sigma[\overline{\K_d}]$ with $\Sigma$ arc-transitive and $d\ge 2$.

Define an equivalence relation on $V(\Sigma)$ by
\[
u\sim v \iff \Gamma^+(u)=\Gamma^+(v)\ \text{and}\ \Gamma^-(u)=\Gamma^-(v).
\]
Since $\Sigma$ is arc-transitive, all classes have the same size, say $t\ge 1$. Let $\mathcal B$ be the set of classes and let $\Sigma'=\Sigma_{\mathcal B}$ be the quotient. Then $\Sigma'$ is arc-transitive, distinct vertices have distinct out/in-neighbourhood pairs, and $\Sigma\cong \Sigma'[\overline{\K_t}]$. Hence
\[
\Gamma\cong \Sigma'[\overline{\K_t}][\overline{\K_d}]\cong \Sigma'[\overline{\K_{td}}].
\]
Set $b=td\ge 2$.

Identify $\Gamma$ with $\Sigma'[\overline{\K_b}]$, and let $\Omega=\{F_x\mid x\in V(\Sigma')\}$ be the fibre partition of size $b$. Each $F_x$ is independent; for distinct $x,y$, either all arcs from $F_x$ to $F_y$ are present (if $(x,y)\in\operatorname{Arc}(\Sigma')$) or none. Hence two vertices have the same out/in-neighbourhood exactly when they lie in the same fibre, so $\mathcal F$ is a system of imprimitivity for $G=\operatorname{Aut}(\Gamma)$.

Let $K$ be the kernel of the action on $\mathcal F$. Since any permutation within a fibre is an automorphism, $\operatorname{Sym}(F_x)\le K$ for each $x$, so $K$ is non-trivial.

Fix $v$, let $F_v$ be its fibre, and let $\Sigma'^+(v_{\Sigma'})$ be the out-neighbourhood of the corresponding vertex in $\Sigma'$. Then
\[
\Gamma^+(v)=\bigcup_{y\in \Sigma'^+(v_{\Sigma'})} F_y.
\]
Let $K_v=K\cap G_v\trianglelefteq G_v$. Since $K$ contains the full symmetric group on each fibre, $K_v$ contains all permutations of the fibres $F_y\subseteq\Gamma^+(v)$ while fixing $F_v$ pointwise. Thus the action of $K_v$ on $\Gamma^+(v)$ contains the direct product of the symmetric groups on each fibre $F_y$. Because $b\ge 2$, this action is non-trivial.

Local quasiprimitivity implies that $K_v$ is transitive on $\Gamma^+(v)$. If $|\Sigma'^+(v_{\Sigma'})|\ge 2$, then $\Gamma^+(v)$ is a union of at least two fibres, and $K_v$ stabilizes each fibre individually, so it cannot be transitive. Hence $|\Sigma'^+(v_{\Sigma'})|=1$.

Thus every vertex of $\Sigma'$ has out-valency $1$. Since $\Sigma'$ is arc-transitive, it is also vertex-transitive and has in-valency $1$; it is connected, antisymmetric, so it is a directed cycle $\vec{C}_m$ with $m\ge 3$. Therefore
\[
\Gamma\cong \vec{C}_m[\overline{\K_b}],
\]
as required.
\end{proof}

\begin{lemma}\label{dig-not-normal-2}
Let $T$ be a cyclic group and $S\subseteq T$ with $S^{-1}\cap S=\varnothing$ and $T=\langle S\rangle$. Let $\Gamma=\operatorname{Cay}(T,S)$ be a  locally-quasiprimitive non-normal circulant digraph of order $n\ge 3$ and valency at least $2$. Suppose that $\Gamma\cong \Sigma[\overline{\K_b}]-b\Sigma$ for some arc-transitive circulant digraph $\Sigma$ of order $m\ge 2$ and an integer  $b\ge 2$ with $\gcd(m,b)=1$. Assume further that $\Gamma\not\cong \Sigma'[\overline{\K_d}]$ for any arc-transitive $\Sigma'$ and any $d\ge 2$. Then
\[
\Gamma\cong \vec{C}_{n_0}\times\K_{n_1},
\]
where $n_0\ge 3$ and $n_1\ge 2$ are integers satisfying   $n_0 n_1=mb$ and $\gcd(n_0,n_1)=1$.
\end{lemma}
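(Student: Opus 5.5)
We follow the pattern of Lemma~\ref{circulant-notminus-1}, replacing the graph version of the structure theorem with its digraph version. First, since $\Gamma\cong\Sigma[\overline{\K_b}]-b\Sigma\cong\Sigma\times\K_b$ with $\gcd(m,b)=1$, we invoke \cite[Theorem~1]{LXZ-at-circ-2021}, which applies equally to arc-transitive circulant digraphs. It gives a connected arc-transitive normal circulant digraph $\Gamma_0$ of order $n_0$, pairwise coprime integers $n_0,n_1,\dots,n_r$ with $n_i\ge 4$ for $i\ge1$, and an integer $b'$, such that
\[
\Gamma\cong(\Gamma_0\times\K_{n_1}\times\cdots\times\K_{n_r})[\overline{\K_{b'}}]
\]
and
\[
\operatorname{Aut}(\Gamma)\cong S_{b'}\wr(\operatorname{Aut}(\Gamma_0)\times S_{n_1}\times\cdots\times S_{n_r}).
\]
We also allow the degenerate case $n_i=2,3$ as the theorem permits after regrouping. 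The hypothesis that $\Gamma$ is not a lexicographic product $\Sigma'[\overline{\K_d}]$ with $d\ge2$ forces $b'=1$, exactly as in Lemma~\ref{circulant-notminus-1}. The hypothesis that $\Gamma$ is non-normal excludes $r=0$, since then $\Gamma\cong\Gamma_0$ would be normal. Hence $r\ge1$ and $\Gamma\cong\Gamma_0\times\K_{n_1}\times\cdots\times\K_{n_r}$.

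Next, fix a vertex $v=(v_0,\dots,v_r)$. The stabilizer decomposes as
\[
\operatorname{Aut}(\Gamma)_v=\operatorname{Aut}(\Gamma_0)_{v_0}\times\prod_{i\ge1}\operatorname{Aut}(\K_{n_i})_{v_i},
\]
and the out-neighbourhood is the product
\[
\Gamma^+(v)=\Gamma_0^+(v_0)\times\prod_{i\ge1}(V(\K_{n_i})\setminus\{v_i\}).
\]
Each direct factor $\operatorname{Aut}(\K_{n_i})_{v_i}\cong S_{n_i-1}$ with $n_i\ge3$ is a nontrivial normal subgroup of $\operatorname{Aut}(\Gamma)_v$. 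It moves only the $i$-th coordinate, so its orbits on $\Gamma^+(v)$ have size $n_i-1$. Quasiprimitivity requires every such orbit to be all of $\Gamma^+(v)$. This forces $r=1$ and $|\Gamma_0^+(v_0)|=1$, provided $\operatorname{Aut}(\Gamma_0)_{v_0}$ is nontrivial or not, it does not matter: the cardinality condition alone forces $|\Gamma_0^+(v_0)|\cdot\prod_{j\ne i}(n_j-1)=1$. If instead some factor has $n_i=2$, that factor is trivial, and we absorb it into $\Gamma_0$ using coprimality.

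A connected arc-transitive circulant digraph with out-valency $1$ is a directed cycle, and since $S\cap S^{-1}=\varnothing$ we get $\Gamma_0\cong\vec C_{n_0}$ with $n_0\ge3$. Therefore $\Gamma\cong\vec C_{n_0}\times\K_{n_1}$ with $\gcd(n_0,n_1)=1$ and $n_0n_1=mb$. Finally, $n_1\ge2$, because otherwise the valency would be $1$.

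The main obstacle is justifying that \cite[Theorem~1]{LXZ-at-circ-2021} applies in the directed setting with the stated form of the automorphism group, in particular the direct-product decomposition of the vertex stabilizer. If that is unavailable, we would argue directly instead. Use Theorem~\ref{arccirculant-1} to write $\Gamma\cong\Sigma\times\K_b$, and induct on $\Sigma$: $\Sigma$ is either normal or again a product. Then note that $\operatorname{Aut}(\Sigma)_x\times S_{b-1}\le\operatorname{Aut}(\Gamma)_v$ acts on $\Gamma^+(v)=\Sigma^+(x)\times(\K_b\setminus\{\text{pt}\})$. The subgroup $S_{b-1}$ is normal in this subgroup, but not a priori in the full stabilizer. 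To get normality in the full stabilizer, we would use the fact that the coordinate partitions are $\operatorname{Aut}(\Gamma)$-invariant, which follows from coprimality.
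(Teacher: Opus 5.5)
Your proposal follows the paper's proof essentially step for step. You invoke the Li--Xia--Zhou decomposition of $\Gamma\cong\Sigma\times\K_b$, obtain $b'=1$ from the non-lexicographic hypothesis and $r\ge 1$ from non-normality, and then use a direct factor $\operatorname{Aut}(\K_{n_i})_{v_i}$ of the vertex stabilizer as a normal subgroup whose orbits on $\Gamma^+(v)$ are too small unless $r=1$ and $|\Gamma_0^+(v_0)|=1$; finally you conclude $\Gamma_0\cong\vec{C}_{n_0}$ with $n_0\ge 3$ because $\Gamma$ is not undirected. Your cardinality count, which notes that the factor is nontrivial only for $n_i\ge 3$ (guaranteed by $n_i\ge 4$ in \cite{LXZ-at-circ-2021}), is slightly more careful than the paper's claim that $(S_{n_1})_{v_1}$ is nontrivial for $n_1\ge 2$, and the dependence on the directed version of \cite{LXZ-at-circ-2021} that you flag is the same one the paper relies on.
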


\begin{proof}
The isomorphism  $\Gamma\cong \Sigma[\overline{\K_b}]-b\Sigma$ yields  $\Gamma\cong \Sigma\times\K_b$.

By the structure theorem for arc-transitive circulant digraphs (see \cite{LXZ-at-circ-2021}), there exist a connected normal arc-transitive circulant digraph $\Gamma_0$ of order $n_0$, integers $n_1,\dots,n_r\ge 2$ ($r\ge 0$), and an integer $b'\ge 1$ such that
\begin{enumerate}[(i)]
  \item $\Gamma_0\not\cong C_4$;
  \item $n_0,n_1,\dots,n_r$ are pairwise coprime;
  \item $|V(\Gamma)|=n_0n_1\cdots n_r b'$;
  \item $\Gamma\cong (\Gamma_0\times\K_{n_1}\times\cdots\times\K_{n_r})[\overline{\K_{b'}}]$;
  \item $\operatorname{Aut}(\Gamma)\cong S_{b'}\wr(\operatorname{Aut}(\Gamma_0)\times S_{n_1}\times\cdots\times S_{n_r})$.
\end{enumerate}
Since $\Gamma$ is not a non-trivial lexicographic product, we must have $b'=1$. Moreover,  $n_0n_1\cdots n_r=1$ would imply $|V(\Gamma)|=1$,  contradicting that  $\Gamma$ is connected and has order at least $3$. Hence
\[
\Gamma\cong \Gamma_0\times\K_{n_1}\times\cdots\times\K_{n_r}.
\]

If $r=0$, then $\Gamma=\Gamma_0$ is normal, a contradiction. Thus $r\ge 1$.

Take a vertex
\[
v=(v_0,v_1,\dots,v_r)\in V(\Gamma_0)\times V(\K_{n_1})\times\cdots\times V(\K_{n_r}).
\]
From condition (v) with $b'=1$,
\[
\operatorname{Aut}(\Gamma)_v=\operatorname{Aut}(\Gamma_0)_{v_0}\times (S_{n_1})_{v_1}\times\cdots\times (S_{n_r})_{v_r}.
\]
Since   $n_1\ge 2$, the factor $(S_{n_1})_{v_1}$ is a non-trivial normal subgroup of $\operatorname{Aut}(\Gamma)_v$. Local quasiprimitivity forces it to act transitively on $\Gamma^+(v)$. But
\[
\Gamma^+(v)=\Gamma_0^+(v_0)\times \prod_{i=1}^r\bigl(V(\K_{n_i})\setminus\{v_i\}\bigr),
\]
and $(S_{n_1})_{v_1}$ only moves vertices in the $\K_{n_1}$-coordinate while fixing all other coordinates. Consequently, for the action to be transitive  all other factors must be trivial, which forces   $r=1$ and $|\Gamma_0^+(v_0)|=1$. Thus $\Gamma_0$ is a connected arc-transitive circulant digraph of out-valency $1$.  If $n_0=2$, then $\Gamma_0$ is a circulant digraph over a cyclic group of order 2 and is therefore  undirected. This would  force
$\Gamma\cong \Gamma_0\times\K_{n_1}$ to be   undirected as well, a contradiction.
Hence  $n_0\geq 3$.
Consequently,  $\Gamma_0\cong \vec{C}_{n_0}$ with $n_0\ge 3$. By (ii) we have $\gcd(n_0,n_1)=1$, and we obtain
\[
\Gamma\cong \vec{C}_{n_0}\times\K_{n_1},
\]
with $n_0\ge 3$, $n_1\ge 2$, and $\gcd(n_0,n_1)=1$.
Finally, $|V(\Gamma)|=n_0n_1$ while the
isomorphism $\Gamma\cong \Sigma\times \K_b$ gives $|V(\Gamma)|=mb$, so
$n_0n_1=mb$, which   completes the proof.
\end{proof}

We are now ready to prove Theorem~\ref{localquasiprim-circ-th}.

\medskip
\noindent{\bf Proof of Theorem~\ref{localquasiprim-circ-th}.}
Let $T$ be a cyclic group of order $n\ge 3$ and let $S\subseteq T$ satisfy $T=\langle S\rangle$. Let $\Gamma=\operatorname{Cay}(T,S)$ be a locally-quasiprimitive circulant digraph. Then $\Gamma$ is connected.

The right regular representation $R(T)$ is transitive on $V(\Gamma)$, so $\Gamma$ is vertex-transitive. Let $u=1_T$ be the identity element of $T$. Then $\Gamma^+(u)=S$. Since $\Gamma$ is locally-quasiprimitive, $G_u$ acts quasiprimitively on $\Gamma(u)$, in particular transitively on $\Gamma(u)$,  hence $\Gamma$ is arc-transitive. Therefore either $S=S^{-1}$ or $S\cap S^{-1}=\varnothing$.

Suppose first that $S=S^{-1}$. If the valency is $2$, then since $\Gamma$ is connected, $\Gamma$ is the cycle graph $C_n$, giving part (I)(4).

Assume now that the valency is at least $3$. If $\Gamma$ is complete, then part (I)(1) holds. Otherwise, by Lemma~\ref{not normal}, $\Gamma$ is not normal. Applying Theorem~\ref{arccirculant-1}, there exists an arc-transitive circulant graph $\Sigma$ of order $m$ such that $mb=n$ with $m,b\ge 2$, and
\[
\Gamma=\begin{cases}
\Sigma[\overline{\K_b}],\\[2mm]
\Sigma[\overline{\K_b}]-b\Sigma,\quad (m,b)=1.
\end{cases}
\]
If $\Gamma=\Sigma[\overline{\K_b}]$, Lemma~\ref{circulant-lex-1} yields $\Gamma\cong \K_{n/2,n/2}$, giving part (I)(2). If $\Gamma=\Sigma[\overline{\K_b}]-b\Sigma$ with $\gcd(m,b)=1$, Lemma~\ref{circulant-notminus-1} yields $\Gamma\cong \K_{n/2,n/2}-(n/2)\K_2$, giving part (I)(3).

Now consider the case $S\cap S^{-1}=\varnothing$. If the valency is $1$, then connectedness forces $\Gamma$ to be the directed cycle $\vec{C}_n$, giving part (II)(1).

Assume henceforth that the valency is at least $2$. If $R(T)$ is normal in $G$, then Lemma~\ref{dicic-normal-1} shows that $\Gamma$ is a normal locally-primitive circulant digraph of prime valency, giving part (II)(2).

If $\Gamma$ is not normal, then again by Theorem~\ref{arccirculant-1} we have
\[
\Gamma=\begin{cases}
\Sigma[\overline{\K_b}],\\[2mm]
\Sigma[\overline{\K_b}]-b\Sigma,\quad (m,b)=1,
\end{cases}
\]
for some arc-transitive circulant digraph $\Sigma$ of order $m$ and $b\ge 2$ with $mb=n$. In the first case, Lemma~\ref{dig-not-normal-1} gives $\Gamma\cong \vec{C}_x[\overline{\K_y}]$,
where  $x\ge 3, y\ge 2$ are integers  with $xy=n$, 
which is part (II)(3). 
In the second case, Lemma~\ref{dig-not-normal-2} gives 
\(
\Gamma\cong \vec{C}_{n_0}\times\K_{n_1},
\)
where $n_0\ge 3$ and $n_1\ge 2$ are integers satisfying   $n_0 n_1=n$ and $\gcd(n_0,n_1)=1$, which is part (II)(4). This completes the proof. \qed


\bigskip
\bigskip

\begin{thebibliography}{hhhh}

\bibitem{ACMX-1996}
B. Alspach, M. Conder, D. Maru\v si\v c and M. Y. Xu, A classification of 2-arc-transitive circulants, {\it J. Algebraic Combin.} {\bf 5} (1996), 83--86.

\bibitem{Carlitz}
L. Carlitz, A theorem on permutations in a finite field, {\it Proc. Amer. Math. Soc.} {\bf 11} (1960), 456--459.

\bibitem{Cameron-1983}
P. J. Cameron, Automorphism groups of graphs, in: Selected Topics in Graph Theory 2 (L. W. Beineke and R. J. Wilson, eds.), Academic Press, London, 1983, pp. 89--127.


\bibitem{Cameron-1}
P. J. Cameron, {\it Permutation Groups}, London Mathematical Society Student Texts, vol. 45, Cambridge University Press, Cambridge, 1999.

\bibitem{Chao1971}
C. Y. Chao, On the classification of symmetric graphs with a prime number of vertices, {\it Trans. Amer. Math. Soc.} {\bf 158} (1971), 247--256.

\bibitem{Chao1973}
C. Y. Chao and J. G. Wells, A class of vertex-transitive digraphs, {\it J. Combin. Theory Ser. B} {\bf 14} (1973), 246--255.

\bibitem{DM-1}
J. D. Dixon and B. Mortimer, {\it Permutation Groups}, Springer, New York, 1996.


\bibitem{Godsil1983}
C. D. Godsil, The automorphism group of some cubic Cayley graphs, {\it Eur. J. Combin.} {\bf 4} (1983), 25--32.

\bibitem{JZ-2026}
W. Jin and J. X. Zhou, On locally primitive bicirculants, {\it J. London Math. Soc.} {\bf 113} (2026), 1--32.

\bibitem{Kovacs-2004}
I. Kov\'acs, Classifying arc-transitive circulants, {\it J. Algebraic Combin.} {\bf 20} (2004), 353--358.

\bibitem{LCH-circulant-2005}
C. H. Li, Permutation groups with a cyclic regular subgroup and arc-transitive circulants, {\it J. Algebraic Combin.} {\bf 21} (2005), 131--136.




\bibitem{LPVZ-2002}
C. H. Li, C. E. Praeger, A. Venkatesh and S. M. Zhou, Finite locally-quasiprimitive graphs, {\it Discrete Math.} {\bf 246} (2002), 197--218.

\bibitem{LXZ-at-circ-2021}
C. H. Li, B. Z. Xia and S. M. Zhou, An explicit characterization of arc-transitive circulants, {\it J. Combin. Theory Ser. B} {\bf 150} (2021), 1--16.

\bibitem{LPS-1}
M. W. Liebeck, C. E. Praeger and J. Saxl, On the O'Nan-Scott theorem for finite primitive permutation groups, {\it J. Aust. Math. Soc. Ser. A} {\bf 44} (1988), 389--396.

\bibitem{Praeger-1993-onanscott}
C. E. Praeger, An O'Nan-Scott theorem for finite quasiprimitive permutation groups and an application to 2-arc-transitive graphs, {\it J. London Math. Soc. (2)} {\bf 47} (1993), 227--239.

\bibitem{Praeger-2}
C. E. Praeger, Finite transitive permutation groups and finite vertex-transitive graphs, in {\it Graph Symmetry: Algebraic Methods and Applications} (Montreal, 1996), NATO Adv. Sci. Inst. Ser. C Math. Phys. Sci. {\bf 497}, Kluwer Acad. Publ., Dordrecht, 1997, pp. 277--318.

\bibitem{Praeger-2003-biq}
C. E. Praeger, Finite transitive permutation groups and bipartite vertex-transitive graphs, {\it Illinois J. Math.} {\bf 47} (2003), 461--475.

\bibitem{Wielandt-book}
H. Wielandt, {\it Finite Permutation Groups}, Academic Press, New York, 1964.

\bibitem{Xu98}
M. Y. Xu, Automorphism groups and isomorphisms of Cayley digraphs, {\it Discrete Math.} {\bf 182} (1998), 309--319.

\end{thebibliography}
\end{document}